\documentclass[leqno,12pt]{article}
\usepackage[intlimits]{amsmath}
\usepackage{amsmath,amsxtra,amssymb,latexsym, amscd,amsthm,pb-diagram,fancyhdr}
\usepackage[mathscr]{eucal}
\usepackage{indentfirst} 
\def\leq{\leqslant}
\def\geq{\geqslant}
\def\le{\leqslant}
\def\ge{\geqslant}
\usepackage{amssymb}
\usepackage{array}
\usepackage{hhline}
\usepackage{longtable}
\usepackage{tabularx}

\usepackage[pctex32]{graphics}
\usepackage{amsmath}
\usepackage[mathscr]{eucal}
\usepackage{amsfonts}
\usepackage{amsthm}
\usepackage{longtable}

\newtheorem{thm}{Theorem}[section]
\newtheorem*{thm*}{Theorem}
\newtheorem{cor}[thm]{Corollary}
\newtheorem{lem}[thm]{Lemma}

\theoremstyle{definition}
\newtheorem{defn}[thm]{Definition}
\theoremstyle{remark}
\newtheorem{rem}[thm]{Remark}


\def\G{{\mathscr G}}


\def\Eb{{\mathbb E}}

\def\Nb{{\mathbb N}}

\def\Pb{{\mathbb P}}

\def\Rb{{\mathbb R}}
\def\Sb{{\mathbb S}}

\def\Zb{{\mathbb Z}}


\def\B {{\mathcal B}} 

\def\D {{\mathcal D}}
\def\E {{\mathcal E}}
\def\F {{\mathcal F}}
\def\G {{\mathcal G}}

\def\al{\alpha}
\def\be{\beta}
\def\de{\delta}

\def\ka{\kappa}

\def\Om{\Omega}
\def\om{\omega}

\def\ep {\varepsilon}
\def\phi{\varphi}
\def\si{\sigma}

\def\th{\theta}

\def\et{\eta}
\def\ze{\zeta}

\def\bar{\overline}
\def\lam{\lambda}



\usepackage{geometry}
\geometry{verbose,tmargin=2cm,bmargin=2cm,lmargin=2cm,rmargin=2cm}
\renewcommand{\proofname}{Proof}

\def\bar{\overline}

\def\leq{\leqslant}
\def\geq{\geqslant}

\begin{document}
\title{Some results on regularity and monotonicity of the speed for excited 
 random walks in low dimensions}         
\author {Cong-Dan Pham\\Duy Tan University, Da Nang, Viet Nam\\ Aix-Marseille University, Marseille, France\\cong-dan.pham@univ-amu.fr}
\maketitle
\begin{abstract}
	Using renewal times and Girsanov's transform, we prove that the speed of the excited random walk is infinitely differentiable with respect to the bias parameter in $(0,1)$ for the dimension $d\ge 2$. At the critical point $0$, using a special method, we also prove that the speed is differentiable and the derivative is positive for every dimension $2\leq d\neq 3.$ However, this is not enough to imply that the speed is increasing in a neighborhood of $0.$ It still remains to prove the derivative is continuous at $0$.
 Moreover, this paper gives some results of monotonicity for $m-$excited random walk when $m$ is large enough or $m=+\infty.$ 
 \end{abstract}
\section{Introduction}
\subsection{Excited random walk}
 Excited random walk (ERW) was introduced in 2003 by I. Benjamini and D.B. Wilson \cite{BW03}. After that, M. Zerner in 2005 studied a generalization called multi-excited random walk on integers. The more general version of ERW called excited random walk in random environment was considered in \cite{MPRV12}, \cite{KZ14}. In this paper, we consider $m-$excited random walk ($m-$ERW) that we define as follows.
  
Let us describe the model: an $m-$ERW with bias parameter $\be\in[0,1]$ is a discrete time nearest neighbor random walk $(Y_n)_{n\ge 0}$ on the lattice $\Zb^d$ obeying the following rule: 
When the walk visits a site for the $k$-th time 
\begin{itemize}
\item if $k>m$, it jumps uniformly at random to  one of the $2d$ neighboring sites,
\item if $k\leq m$, it jumps with probability $(1+\be)/2d$ to the right, probability $(1-\be)/2d$ to the left, and probability 1/(2d) to the other nearest neighbor sites.
\end{itemize}
Let $(e_i:1\leq i\leq d)$ denote the canonical generators of the group $\Zb^{d}.$ From the description above of $m-$ERW, the law $\Pb_{m,\be}$ of $m-$ERW which is the probability on the path space $(\Zb^d)^\Nb$, is defined by: 
\begin{itemize}
\item $\Pb_{m,\be}(Y_0=0)=1$,
\item $\Pb_{m,\be}[Y_{n+1}-Y_n=\pm e_i|Y_0, \cdots, Y_n]=\frac{1}{2d}$ for $2\leq i\leq d$,
\item Denote by $\{Y_n\notin_k\}$ the event that $Y_n$ has been visited  exactly $k$ times at time $n$, i.e. $\{Y_n\notin_k\}:=[\#\{i\le n:Y_i=Y_n\}=k]$. For $k=1$ we also denote $\{Y_n\notin\}:=\{Y_n\notin_1\}$. Denote $\{Y\in_k\}$ and $\{Y_n\in\}$ be respectively the compliment events of $\{Y_n\notin_k\}$ and $\{Y_n\notin\}$. Let $\F_n$ be the $\si$-algebra $\si(Y_0,Y_1,...,Y_n)$ generated by the random walk up to time $n$. By the definition of $m$-ERW, we have
\begin{equation}\label{defE}
\Pb_{m,\be}[Y_{n+1}-Y_n=\pm e_1|\F_n,Y_n\notin_k]=
 \frac{1\pm \beta 1_{k\le m}}{2d}.
\end{equation}
\end{itemize}

When $m=1$ we recover excited random walk. Denote by $\Pb_{\be},\Pb_{m,\be}$ the laws (resp. $\Eb_{\be},\Eb_{m,\be}$ the expectations) respectively of $1$-ERW, $m$-ERW. Let $N_n$ be the numbers of visited points by the random walk $Y$ at time $n$ i.e the range of the random walk $Y$ up to time $n$ then $N_n=\sum_{i=0}^{n-1}1_{Y_i\notin}$. When $m=+\infty$ we obtain simple random walk with bias $\be$ (SRW). Set $R_n(\be):=\frac{\Eb_{\infty,\be}(N_n)}{n}$. It has been proved that (see \cite{Spi01}, \cite{LGR91}), $\Pb_{\infty,\be}$-a.s. the limit $lim_{n\to +\infty}\frac{N_n}{n}$ exists. It implies the existence of $lim_{n\to +\infty}\frac{R_n(\be)}{n}$ and denote this limit by $R(\be).$

In 2007, J. B$\acute{\text{e}}$rard and A. Ram$\acute{\text{\i}}$rez \cite{BR07} proved a law of large numbers and a $1$-dimensional result of central limit theorem for the $1$-ERW with $d\ge 2$ and $\be>0$, namely:
\begin{itemize}
\item (Law of large numbers). There exists $\ v=v(\be,d), 0<v<+\infty$ such that a.s.
$$\lim_{n\to\infty}n^{-1}Y_n\cdot e_1=v. $$
\item  (Central limit theorem). There exists $\si=\si(\be,d),0<\si<+\infty$ such that
$(n^{-1/2}(Y_{\lfloor nt\rfloor}\cdot e_1-v\lfloor nt \rfloor), t \ge 0)$ 
converges in law as $n\to+\infty$ to a Brownian motion with variance $\si^2$.
\end{itemize}

In 2012, with the different approach, M. Menshikov, S. Popov, A. Ram$\acute{\text{\i}}$rez  and M. Vachkovskaia proved a law of large numbers and a $d$-dimentional result of central limit theorem for the excited random walk in random environment, see Theorem 1.2 of \cite{MPRV12}.

Before stating our result, we define some notions. Let $Y=(Y_n)_{n\geq 0}$ be the $1$-ERW. Let $\tau_1$ is the first renewal time of $1$-ERW (For more detail, see in the Section \ref{rene}, Definition \ref{tau} and Lemma \ref{esstau}). Informally, a renewal time (in direction $e_1$) is a maximum of the first component of the random walk $(Y_n\cdot e_1)_{n\geq 0}$ which is also a minimum of the future of the first component of the random walk. A time $n$ is a renewal time if 
$$
\sup_{0\leq i<n}Y_i\cdot e_1<Y_n\cdot e_1\leq \inf_{n\leq i } Y_i\cdot e_1.
$$
So, the first renewal time $\tau_1$ is defined by
$$\tau_1=\inf\{n\geq 0: \sup_{0\leq i<n}Y_i\cdot e_1<Y_n\cdot e_1\leq \inf_{n\leq i } Y_i\cdot e_1\}.$$ We also denote $\tau:=\tau_1.$ Set $\bar{D}:=\inf\{m\geq 0: Y_m\cdot e_1<Y_0\cdot e_1\}.$ For $1$-ERW, it has been proved in \cite{BR07} that $\Pb_{\be}(\bar{D})>0$. Therefore, we can define the conditional probability $\hat{\Pb}_{\be}(\cdot)=\Pb_{\be}(\cdot|\bar{D}).$ Let $\hat{\Eb}_{\be}$ be the expectation with respect to $\hat{\Pb}_{\be}.$

 Our main result about regularity for the $1$-ERW is the following:
\begin{thm}\label{smallD} For $d\geq 2$, let $v_n(\be)$ be the speed at time $n$ i.e. $v_n(\be)=\frac{\Eb_{\be}(Y_n\cdot e_1)}{n}$ and $v(\be)$ be the speed of $1$-ERW.
\begin{itemize}
\item  The speed $v(\be)$ is infinitely differentiable on $(0,1)$ i.e. $v(\be)\in C^{\infty}(0,1)$ and 
$$\frac{\partial^kv}{\partial\be^k}(\be)=\lim_{n\to\infty}\frac{\partial^kv_n}{\partial\be^k}(\be)\text{ for every }k\in\Nb,\, \be>0.$$ The derivative is expressed in function of renewal time:
\begin{equation}\label{deriv}
\frac{\partial v}{\partial\be}(\be)=\frac{1}{d}.\frac{\hat{\Eb}_{\be}N_{\tau}}{\hat{\Eb}_{\be}\tau}+\frac{\be}{d}.
\frac{\hat{\Eb}_{\be}(N_{\tau}V_{\tau})\hat{\Eb}_{\be}\tau-\hat{\Eb}_{\be}N_{\tau}\hat{\Eb}_{\be}({\tau}V_{\tau})}{(\hat{\Eb}_{\be}\tau)^2}\text{ for }\be>0,
\end{equation}
where $$\E_i=X_{i+1}-X_i,\,V_n=\sum_{i=0}^{n-1}\frac{\E_i1_{Y_i\notin}}{1+\be\E_i}.$$ 
\item For $d=2$ or $d\geq 4$, the speed is differentiable at $\be=0$, the derivative at $0$ is positive, and such that
$$\lim_{\be\to 0}\frac{v(\be)}{\be}=\lim_{n\to\infty}\frac{\partial v_n}{\partial \be}(0),$$
with $$\lim_{n\to\infty}\frac{\partial v_n}{\partial\be}(0)=\frac{1}{d}\lim_{n\to\infty}\frac{R_n(0)}{n}=\frac{1}{d}R(0)>0\text{ for }d\geq 4,\text{ and equals }0\text{ for }d=2.$$
\item For $d=3$ then $$\limsup_{\be\to0}\frac{v(\be)}{\be}\leq\lim_{n\to\infty}\frac{\partial v_n}{\partial\be}(0)=\frac{1}{d}R(0).$$
\end{itemize}
\end{thm}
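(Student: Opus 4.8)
The plan is to reduce the whole theorem to the behaviour of the fresh-site count $N_n$, and to run everything through the Radon--Nikodym density of $\Pb_\be$ against the simple random walk $\Pb_0$. First I would record the elementary conditional-mean identity: at a fresh site the mean $e_1$-increment is $\be/d$ and at an already-visited site it is $0$, so summing over $i<n$,
$$\Eb_\be[Y_n\cdot e_1]=\frac{\be}{d}\,\Eb_\be[N_n],\qquad\text{hence}\qquad v_n(\be)=\frac{\be}{d}\cdot\frac{\Eb_\be[N_n]}{n}.$$
Renewal--reward along the regeneration times of \cite{BR07,MPRV12} then upgrades this, for $\be>0$, to $v(\be)=\tfrac{\be}{d}\,\hat\Eb_\be N_\tau/\hat\Eb_\be\tau$. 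The second ingredient is Girsanov: up to time $n$ the density is $M_n(\be)=\prod_{i=0}^{n-1}(1+\be\,\E_i\,1_{\{Y_i\notin\}})$, a polynomial in $\be$, whose logarithmic derivative is exactly the score of the statement, $\partial_\be\log M_n=V_n$.

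For the first item I would differentiate in the reference measure, $v_n(\be)=\tfrac1n\Eb_0[(Y_n\cdot e_1)M_n(\be)]$, which is entire in $\be$ with $\partial_\be^k v_n=\tfrac1n\Eb_\be[(Y_n\cdot e_1)\,Q_k]$ for explicit polynomials $Q_k$ in $V_n$ and its $\be$-derivatives. The real work is to show these converge, uniformly on every compact subinterval of $(0,1)$, as $n\to\infty$; the standard theorem on term-by-term differentiation of a uniformly convergent sequence then gives $v\in C^\infty(0,1)$ and $\partial_\be^k v=\lim_n\partial_\be^k v_n$. To identify the limit of $\partial_\be v_n$ with \eqref{deriv} I would instead differentiate the renewal formula $v=\tfrac{\be}{d}\,\hat\Eb_\be N_\tau/\hat\Eb_\be\tau$: the Girsanov score supplies the factors $V_\tau$ via $\partial_\be\hat\Eb_\be N_\tau=\hat\Eb_\be(N_\tau V_\tau)$ and $\partial_\be\hat\Eb_\be\tau=\hat\Eb_\be(\tau V_\tau)$, and the quotient rule reproduces \eqref{deriv} verbatim. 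The principal obstacle here is quantitative: one must bound all mixed moments $\hat\Eb_\be[\tau^a N_\tau^b|V_\tau|^c]$ and control their dependence on $\be$ uniformly on compacts of $(0,1)$. Tail bounds for $\tau$ and $N_\tau$ come from the regeneration construction, but $V_\tau$ is an unbounded martingale-type sum over the block, so producing its high moments — and handling the $\be$-dependence of the regeneration event itself — is the delicate part and the one I expect to dominate the effort.

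For the critical point I would first read off, from $v_n(\be)=\tfrac{\be}{d}\Eb_\be[N_n]/n$, that $\partial_\be v_n(0)=\tfrac1d\Eb_0[N_n]/n$; since at $\be=0$ the walk is simple and $N_n$ is its range $R_n$, this gives $\lim_n\partial_\be v_n(0)=\tfrac1d R(0)$. The upper bound, valid in every dimension, comes from subadditivity of $n\mapsto\Eb_\be[N_n]$: by Fekete, $v(\be)/\be=\tfrac1d\lim_n\Eb_\be[N_n]/n=\tfrac1d\inf_n\Eb_\be[N_n]/n\le\tfrac1d\,\Eb_\be[N_n]/n$ for every fixed $n$. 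Letting $\be\to0$ at fixed $n$, where $\Eb_\be[N_n]$ is a polynomial in $\be$ hence continuous, and then $n\to\infty$, yields $\limsup_{\be\to0}v(\be)/\be\le\tfrac1d R(0)$. This settles $d=3$ outright; it also settles $d=2$, where the walk is recurrent so $R(0)=0$ and the nonnegative ratio $v(\be)/\be$ is squeezed to $0$.

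The hardest point is the matching lower bound $\liminf_{\be\to0}v(\be)/\be\ge\tfrac1d R(0)$ for $d\ge4$, i.e.\ that the asymptotic fresh-site density $q(\be):=\lim_k\Pb_\be(Y_k\notin\{Y_0,\dots,Y_{k-1}\})$ does not drop below its $\be=0$ value $R(0)$. Here $\be\to0$ and $k\to\infty$ cannot be exchanged naively: writing $\Pb_\be(Y_k\ \text{fresh})=\Eb_0[1_{\{Y_k\ \text{fresh}\}}M_k(\be)]$, the correction $M_k-1$ is driven by $\be V_k$, and since $V_k$ is of order $\sqrt{k}$ the density becomes a genuine $O(1)$ perturbation once $k\gtrsim\be^{-2}$. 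The whole question therefore concentrates at the crossover scale $k\sim\be^{-2}$, and on whether the excess return probability $\Pb_0(Y_k\ \text{fresh})-R(0)\sim c_d\,k^{-(d-2)/2}$ decays fast enough there to absorb the Girsanov correction. For $d\ge4$ the exponent $(d-2)/2\ge1$ makes this contribution negligible and the interchange can be justified, giving equality and $\partial_\be v(0)=R(0)/d>0$; for $d=3$ the exponent is only $1/2$, the crossover term is exactly borderline, and that is precisely why only the one-sided estimate survives. I expect this crossover analysis — controlling the effect of the change of measure on the range at scale $\be^{-2}$ against the dimension-dependent return-probability tail — to be the principal difficulty of the theorem.
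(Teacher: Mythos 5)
Your treatment of the first item is essentially the paper's: the identity $v_n(\be)=\frac{\be}{d}\,\Eb_\be[N_n]/n$, the renewal representation $v=\hat{\Eb}_\be X_\tau/\hat{\Eb}_\be\tau$, and differentiation through the Girsanov density $M_\tau$ on the $\si$-algebra $\F_\tau$ (justified by Fubini and moment bounds on $\tau$) are exactly the ingredients used there, so apart from the deferred moment estimates that part of the plan is sound.

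The second and third items contain two genuine gaps. First, your upper bound rests on subadditivity of $n\mapsto\Eb_\be[N_n]$. The pathwise inequality $N_{n+m}\le N_n+(\text{range of the segment }(Y_{n+i}-Y_n)_{0\le i\le m})$ is true, but for $\be>0$ the ERW is self-interacting: the law of that shifted segment is \emph{not} $\Pb_\be$ (the shifted walk receives less excitation, many of its sites having already been consumed before time $n$), so $\Eb_\be[N_{n+m}]\le\Eb_\be[N_n]+\Eb_\be[N_m]$ is not established and Fekete does not apply. The paper's substitute is elementary and dimension-free: bound $N_n$ pathwise by $N^{(k)}_n$, the number of $i<n$ with $Y_i\notin\{Y_{i-1},\dots,Y_{i-k}\}$; then $v(\be)/\be\le\frac1d N^{(k)}(\be)$, each $N^{(k)}$ is continuous at $0$ since only the last $k-1$ Girsanov factors matter (giving $|N^{(k)}(\be)-N^{(k)}(0)|\le(1+\be)^{k-1}-1$), and $N^{(k)}(0)\downarrow R(0)$. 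This yields the $d=3$ bound and the $d=2$ squeeze, both of which in your write-up currently lean on the unproved subadditivity. Second, and more seriously, the matching lower bound for $d\ge4$ is only a heuristic in your proposal: you identify the crossover scale $k\sim\be^{-2}$ and assert that "the interchange can be justified," but supply no mechanism for the required uniformity in $\be$, which is precisely the crux of the theorem. The paper's device is to show that $N^{(k)}(\be)\to N(\be)$ \emph{uniformly in} $\be$ when $d\ge4$, via $|\Pb_\be(Y_n\notin_k)-\Pb_\be(Y_n\notin)|\le\sum_{j>k}\Pb(Z_j=0)\le C\sum_{j>k}j^{-(d-1)/2}$, where $Z$ is the transverse $(d-1)$-dimensional component whose law does not depend on $\be$; the dimension restriction enters through summability of $j^{-(d-1)/2}$, not through the return-time exponent $(d-2)/2$ your sketch tracks. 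Uniform convergence of the continuous functions $N^{(k)}$ then gives continuity of $N$ at $0$ and the two-sided statement. Without this (or an equivalent uniform-in-$\be$ comparison), your proposal does not actually prove differentiability at the critical point.
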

R. van der Hofstad and M. Holmes proved in  \cite{vdHH10} that the speed of $1$-ERW $v$ is strictly increasing in $\be$ for $d\ge 9$ and is increasing in a neighborhood of $0$ for $d=8$ relying on the lace expansion technique. This technique is also used to prove the monotonicity for random walk in partially random environment ($m=+\infty$), see \cite{HSu12}. Using the
same expansion technique, it is shown in \cite{Hol12}, Th. $2.3$ that the speed
is in an appropriate sense continuous in the drift parameter $\be$ if
$d\geq 6$ and even differentiable if $d\geq 8.$ In our paper, we prove that the speed is infinitely differentiable on $(0,1)$ for all $d\geq 2$ using renewal times and Girsanov's transform. We are also interested in the derivative at the critical point $\be=0$. When the derivative at $0$ is positive and is continuous at $0$ then the velocity is monotonic in a neighborhood of $0$. The existence of the derivative at $0$ of the speed of a random process and the relation of that derivative with the diffusion constant of the
equilibrium state play an important role in mathematical physics. This problem is known as ``Einstein relation for random process", (see for instance the work of N. Gantert, P. Mathieu and A. Piatnitski \cite{GMP12}, see also  \cite{BHOZ13}, \cite{KOa05}, \cite{KOb05}, \cite{LR94}). In \cite{GMP12}, the authors used renewal times by $\tau_n\sim \frac{n}{\lam^2}$ and they used the Markov property of a random process to prove the existence of the derivative of the speed at $0$. However, we have not yet known how to use this technique for $1$-ERW when the Markov property disappears and the increments $\{Y_{[\tau_n,\tau_{n+1})}\}_{n\geq 0}$ are not independent anymore where $\{Y_{[n,n+p)}\}=\{Y_{n+i}-Y_n, 1\leq i< p\}$. In our present work, we use a special method to prove the existence of the derivative at $0$ for any dimension $2\leq d \ne 3.$ The the case $d=3$ is still open. In addition, the question on the existence of the second derivative of the speed at $0$ is also unknown, even the continuity of the derivative of the speed at $0.$

In Theorem \ref{smallD}, we have the formula of the derivative of the speed that is expressed in function of renewal time:
$$
\frac{\partial v}{\partial\be}(\be)=\frac{1}{d}.\frac{\hat{\Eb}_{\be}N_{\tau}}{\hat{\Eb}_{\be}\tau}+\frac{\be}{d}.
\frac{\hat{\Eb}_{\be}(N_{\tau}V_{\tau})\hat{\Eb}_{\be}\tau-\hat{\Eb}_{\be}N_{\tau}\hat{\Eb}_{\be}({\tau}V_{\tau})}{(\hat{\Eb}_{\be}\tau)^2}\text{ for }\be>0.
 $$
We have $$\frac{\partial v}{\partial\be}(0)=\lim_{\be\to 0}\frac{1}{d}.\frac{\hat{\Eb}_{\be}N_{\tau}}{\hat{\Eb}_{\be}\tau}.$$
Then, if we want to prove the continuity of the derivative at $0$, it is equivalent to proving that:
$$\lim_{\be\to 0}\left|\frac{\partial v}{\partial\be}(\be)-\frac{\partial v}{\partial\be}(0)\right
|=\lim_{\be\to 0}\frac{\be}{d}.
\left|\frac{\hat{\Eb}_{\be}(N_{\tau}V_{\tau})\hat{\Eb}_{\be}\tau-\hat{\Eb}_{\be}N_{\tau}\hat{\Eb}_{\be}({\tau}V_{\tau})}{(\hat{\Eb}_{\be}\tau)^2}\right|=0.$$
For this reason, we want to estimate the renewal time $\tau$ as a function of the bias $\be.$
On the other hand, the speed at time $n$ is that: $v_n(\be)=\frac{\be}{d}\Eb_{\be}\left(\frac{N_n}{n}\right).$ Using the Girsanov's transform in Lemma \ref{Girf}, and the notations in \eqref{NoGirf} we obtain 
$$
v_n(\be)=\frac{\be}{d}\Eb_{0}\left(\frac{N_n M_n}{n}\right)
$$
Take the derivative in $\be$, with remark that $N_n=\sum_{i=0}^{n-1}1_{Y_i\notin}$ does not depend on $\be$ we get:
\begin{align*}
\frac{\partial v_n}{\partial\be}(\be)=
\frac{1}{d}\Eb_{0}\left(\frac{N_n M_n}{n}\right)+\frac{\be}{d}\Eb_{0}\left(\frac{N_n\frac{\partial M_n}{\partial\be}}{n}\right).
\end{align*}
Moreover,
$$
\frac{\partial M_n}{\partial\be}=\frac{\partial}{\partial\be}\left[\prod_{i=0}^{n-1}(1+\be\E_i1_{Y_i\notin})\right]=\left(\sum_{i=0}^{n-1}\frac{\E_i1_{Y_i\notin}}{1+\be\E_i}\right)\left[\prod_{i=0}^{n-1}(1+\be\E_i1_{Y_i\notin})\right]=V_nM_n.
$$ 
Therefore, combining with Girsanov's transform we get
$$
\frac{\partial v_n}{\partial\be}(\be)=\frac{1}{d}\Eb_{0}\left(\frac{N_n M_n}{n}\right)+\frac{\be}{d}\Eb_{0}\left(\frac{N_nV_nM_n}{n}\right)
=\frac{1}{d}\Eb_{\be}\left(\frac{N_n}{n}\right)+\frac{\be}{d}\Eb_{\be}\left(\frac{N_nV_n}{n}\right).
$$ 
Then, combining the second point of Theorem \ref{smallD}, for $2\leq d\ne 3$ we obtain
\begin{align}\label{conj}
\limsup_{\be\to 0}\left|\frac{\partial v}{\partial\be}(\be)-\frac{\partial v}{\partial\be}(0)\right|=\limsup_{\be\to 0}\lim_{n\to\infty}\left|\frac{\partial v_n}{\partial\be}(\be)-\frac{\partial v_n}{\partial\be}(0)\right|=\limsup_{\be\to 0}\lim_{n\to\infty}\left|\frac{\be}{d}\Eb_{\be}\left(\frac{N_nV_n}{n}\right)\right|.
\end{align}
Since \eqref{defE},
\begin{align}\label{ec0}
\Eb_{\be}\left(\frac{\E_i1_{Y_i\notin}}{1+\be\E_i}|\F_i\right)&=\frac{\Pb_{\be}(Y_i\notin)}{1+\be}\Pb_{\be}(\E_i=1|\F_i,Y_i\notin)-\frac{\Pb_{\be}(Y_i\notin)}{1-\be}\Pb_{\be}(\E_i=-1|\F_i,Y_i\notin)\\
&=\frac{\Pb_{\be}(Y_i\notin)}{1+\be}\frac{1+\be}{2d}-\frac{\Pb_{\be}(Y_i\notin)}{1-\be}\frac{1-\be}{2d}=0.
\end{align}
This implies that
\begin{align}\label{e0}
\Eb_{\be}\left(\frac{\E_i1_{Y_i\notin}}{1+\be\E_i}\right)=\Eb_{\be}\left[\Eb_{\be}\left(\frac{\E_i1_{Y_i\notin}}{1+\be\E_i}|\F_i\right)\right]=0,\text{ so } \Eb_{\be}(V_n)=0.
\end{align}

Moreover, for all $i>j$ then
\begin{align}\label{em0}
\Eb_{\be}\left(\frac{\E_i1_{Y_i\notin}}{1+\be\E_i}\frac{\E_j1_{Y_j\notin}}{1+\be\E_j}\right)=\Eb_{\be}\left[\frac{\E_j1_{Y_j\notin}}{1+\be\E_j}\Eb_{\be}\left(\frac{\E_i1_{Y_i\notin}}{1+\be\E_i}|\F_i\right)\right]=0.
\end{align}
Combining \eqref{conj} and \eqref{e0} we have
\begin{align*}
\limsup_{\be\to 0}\left|\frac{\partial v}{\partial\be}(\be)-\frac{\partial v}{\partial\be}(0)\right|&=\limsup_{\be\to 0}\lim_{n\to\infty}\frac{\be}{d}\left|\frac{Cov_{\be}(N_n,V_n)}{n}\right|\\
&\leq \limsup_{\be\to 0}\limsup_{n\to\infty}\frac{\be}{d}\sqrt{\frac{Var_{\be}N_n}{n}}\sqrt{\frac{Var_{\be}V_n}{n}}.
\end{align*}

\begin{align*}
\frac{Var_{\be}V_n}{n}=\frac{\Eb_{\be}\left[\left(\sum_{i=0}^{n-1}\frac{\E_i1_{Y_i\notin}}{1+\be\E_i}\right)^2\right]}{n}=\frac{\sum_{i=0}^{n-1}\Eb_{\be}\left[\left(\frac{\E_i1_{Y_i\notin}}{1+\be\E_i}\right)^2\right]}{n}\leq \frac{1}{(1-\be)^2}.
\end{align*}
Therefore, if $$\lim_{\be\to 0}\limsup_{n\to\infty}\left(\be^2\frac{Var_{\be}N_n}{n}\right)=0$$
then the derivative $\frac{\partial v}{\partial\be}(\be)$ is continuous at $0.$

For $m-$ERW, we have some results about the monotonicity as follows:
\begin{thm}\label{monotonesmall} Consider $m$-ERW, $Y=(Y_n)_{n\geq 0}$, let $v(m,\be)$ be the speed of $m$-ERW with bias $\be.$ Then,
\begin{itemize}
\item The range $R(\be)$ of $\infty$-ERW is monotonic in $\be\in[0,1]$.
\item For $d\geq 4$, for every $0<\be_0<\be_1<1$, there exists an integer $m_0=m(\be_0,\be_1)$ large enough such that $v(m,\be)$ is monotonic on $[\be_0,\be_1]$ for every $m\geq m_0.$
\end{itemize}
\end{thm}
In the second part of this theorem, when $d\geq 8$ we recover Theorem $1.2$ of \cite{Pha15}. This result for $d\geq 8$ can also be obtained by minor modification of the proof of Theorem $2.3$ of \cite{Hol12}.
Let us explain the organization of this paper.

In Section $2$ we present the renewal structure for random walks, we also recall some important properties of renewal times.

In Section $3$ we give the proof of Theorem \ref{smallD}. We use Girsanov's transform with renewal time to prove that the speed of $1$-ERW is infinitely differentiable for bias $\be$ positive. For the existence of the derivative at critical point $0$, we use a special method for $1$-ERW.

In Section $4$ we prove Theorem \ref{monotonesmall}. Firstly, we introduce the coupling of $m-$ERW with stationary random walk with bias $\be$. For the monotonicity of the range of simple random walk ($\infty$-ERW) we see that a similar proof can apply for the case random walk in random environment on integer (i.e. in dimension $d=1$). For the dimension $d\geq 2$ we have known yet how to prove this problem. Actually a lot is known about monotonicity of 1-dimensional multi-excited random walks (including RWRE). This includes monotonicity of running maxima, local times, velocity...(see \cite{Zer05},\cite{HSa12}, \cite{Pet13}, \cite{Hol15}), the continuity of the speed was considered in \cite{Zer05},\cite{BS08}. For monotonicity of the speed of $m$-ERW when $m$ large enough, we use the coupling method and a key Lemma presented in this section.
 
\section{The renewal structure}\label{rene}
We define the renewal times for a random walk.
Let $\{Y_n\}_{n\geq 0}$ be a random walk on $\Zb^d$.
\begin{defn}\label{tau}
We present the definition based on the definition given in \cite{BR07} and \cite{MPRV12}. With the convention that $\inf\{\emptyset\}=\infty$ and all random times in the Definition take the value on $[0,+\infty]$. For every $u>0$ let:
$$T_u=\min\{k\geq 1: Y_k\cdot e_1\geq u\}.$$
Define 
$$\bar{D}=\inf\{m\geq 0: Y_m\cdot e_1<Y_0\cdot e_1\}.$$
Furthermore, define two sequences of $\F^Y_n-$stopping times $\{S_n: n\geq 0\}$ and $\{D_n: n\geq 0\}$ as follows: We let $S_0=0, R_0=Y_0\cdot e_1$ and $D_0=0.$ Next, define by induction on $k\geq 0$
\begin{align*}
&S_{k+1}=T_{R_k+1}\\
&D_{k+1}=\bar{D}\circ\th_{S_{k+1}}+S_{k+1}\\
&R_{k+1}=\sup\{Y_{i}\cdot e_1: 0\leq i\leq D_{k+1}\},
\end{align*}
where $\th$ is the canonical shift on the space of trajectories. Let
$$\ka=\inf\{n\geq 0: S_n<\infty, D_n=\infty\}.$$ We define the first renewal time as follows: $$\tau_1=S_{\ka}.$$
We then define by induction on $n\geq 1$, the sequence of renewal times $\tau_1, \tau_2,...$ as follows:
$$\tau_{n+1}=\tau_n+\tau_1(Y_{\tau_n+\cdot}).$$
Next, we define $D_i^{(0)}=D_i$ and $S_i^{(0)}=S_i$ and for every $k\geq 1$ two sequences $D_i^{(k)}$ and $S_i^{(k)}$ w.r.t. the trajectory $(Y_{\tau_{k}+.})$, of the same way that the sequences $D_i$ and $S_i$ are defined w.r.t. $(Y.)$
For example, $S_0^{(1)}, R_0^{(1)}=Y_{\tau_1}\cdot e_1, D_0^{(1)}=0$ and we define by induction on $i\geq 0,$
\begin{align*}
&S_{i+1}^{(1)}=T_{R_i^{(1)}+1}\\
&D_{i+1}^{(1)}=\bar{D}\circ\th_{S_{i+1}^{(1)}}+S_{i+1}^{(1)}\\
&R_{i+1}^{(1)}=\sup\{Y_i\cdot e_1: 0\leq i\leq D_{i+1}^{(1)}\}.
\end{align*}
For every $k\geq 1$ and $j\geq 0$ such that $S_j^{(k)}<\infty,$ we need to introduce the $\si$-algebra $\G_j^{(k)}$ of the events up to $S_j^{(k)}$ as the smallest $\si-$algebra containing all of the sets of the form $\{\tau_1\leq n_1\}\cap\{\tau_2\leq n_2\}\cap...\{\tau_k\leq n_k\}\cap A$, where $n_1<n_2<...<n_k$ are integers and $A\in\F_{n_k+S_j^{(0)}\circ \th_{n_k}}.$ Let $\tau_0=0$ and  $G_0^{(0)}$ be trivial.
\end{defn}

The signification of renewal times is given by the following lemma:
\begin{lem}\label{esstau}
The first renewal time $\tau_1$ is the first time when the random walk attains the hyperplane $\{y\cdot e_1=Y_{\tau_1}\cdot e_1\}$, and after that it does not come back anymore behind this hyperplane, i.e.
$$\tau_1=\inf\{n\geq 0: \sup_{0\leq i<n}Y_i\cdot e_1<Y_n\cdot e_1\leq \inf_{n\leq i } Y_i\cdot e_1\}.$$
\end{lem}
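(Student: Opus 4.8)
The plan is to prove Lemma~\ref{esstau} by showing that the two descriptions of $\tau_1$ coincide as events on trajectory space. Let me denote by $\tau_1 = S_\kappa$ the renewal time built inductively in Definition~\ref{tau}, and let
\[
\sigma = \inf\Bigl\{n\geq 0 : \sup_{0\leq i<n} Y_i\cdot e_1 < Y_n\cdot e_1 \leq \inf_{n\leq i} Y_i\cdot e_1\Bigr\}
\]
be the candidate ``record that is never undercut'' time. First I would unpack the meaning of $\sigma$: the condition $\sup_{0\leq i<n} Y_i\cdot e_1 < Y_n\cdot e_1$ says that at time $n$ the walk reaches a \emph{strict} running maximum of its first coordinate, while $Y_n\cdot e_1 \leq \inf_{n\leq i} Y_i\cdot e_1$ says that from time $n$ onward the walk never returns strictly below the level $Y_n\cdot e_1$. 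So $\sigma$ is the first epoch at which the walk sets a new strict record in the $e_1$-direction and then stays weakly to the right of that record forever. The goal is exactly $\tau_1 = \sigma$ almost surely.

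For the inclusion $\tau_1 \geq \sigma$ (equivalently, $\tau_1$ satisfies the defining property of $\sigma$, so $\sigma$ occurs no later than $\tau_1$), I would examine what $S_\kappa < \infty$ and $D_\kappa = \infty$ mean. By construction $S_{k+1} = T_{R_k+1}$ is the first time the walk strictly exceeds the previous record level $R_k$, so at time $S_k$ the walk is at a strict running maximum of $Y_i\cdot e_1$ over $0\leq i < S_k$; this gives the first half of the condition defining $\sigma$. The event $\{D_\kappa = \infty\}$ means $\bar D\circ\theta_{S_\kappa} = \infty$, i.e.\ after time $S_\kappa$ the walk never drops strictly below its value $Y_{S_\kappa}\cdot e_1$; this is precisely $Y_{S_\kappa}\cdot e_1 \leq \inf_{S_\kappa \leq i} Y_i\cdot e_1$, the second half of the condition. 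Hence $n = \tau_1$ satisfies both defining inequalities of $\sigma$, so $\sigma \leq \tau_1$.

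For the reverse inclusion $\sigma \geq \tau_1$, I would argue that $\sigma$ must in fact be one of the record times $S_k$, and moreover the one selected by $\kappa$. If $n = \sigma < \infty$, then $Y_n\cdot e_1 > \sup_{0\leq i<n} Y_i\cdot e_1$ forces $n$ to be a strict record, so $n = T_{R+1}$ for the running maximum $R$ just before $n$; tracking the inductive definition, $n$ coincides with some $S_k$ in the sequence. The fact that the walk never returns strictly behind level $Y_n\cdot e_1$ afterward means $D_k = \bar D\circ\theta_{S_k}+S_k = \infty$, so $n = S_k$ is a valid renewal candidate with $D_k = \infty$. Since $\kappa$ is defined as the \emph{first} index with $S_\kappa<\infty$ and $D_\kappa = \infty$, and since every earlier record $S_j$ with $j<k$ has $D_j<\infty$ (otherwise the walk would already have stayed weakly to the right at an earlier strict record, contradicting minimality of $\sigma = n$), we conclude $k = \kappa$ and $n = S_\kappa = \tau_1$.

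The main obstacle, and the step I would handle most carefully, is the bookkeeping in the reverse inclusion: one must verify that the inductively defined record levels $R_k$ and times $S_k$ exhaust exactly the strict running maxima, and that no strict record strictly between two consecutive $S_k$ is skipped by the induction. In particular I would confirm that between $S_k$ and the next strict record after $D_k$ the walk does dip below $R_k$ (by definition of $D_k<\infty$ when $D_k$ is finite), so the sequence $S_0<S_1<\cdots$ genuinely lists the successive record-setting epochs in order, which is what makes the identification $n = S_\kappa$ unambiguous. Once this combinatorial correspondence between strict records, the $\bar D$-excursions, and the stopping times $\{S_k,D_k\}$ is pinned down, both inclusions follow and the equality $\tau_1 = \sigma$ is established.
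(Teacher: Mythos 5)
Your overall strategy is the same as the paper's: show the easy inclusion that $\tau_1=S_\kappa$ satisfies both defining inequalities of $\sigma$ (so $\sigma\leq\tau_1$), then show $\sigma$ must coincide with some $S_k$ having $D_k=\infty$, whence $\sigma\geq S_\kappa=\tau_1$. However, the auxiliary claim you single out as the one to verify carefully --- that the times $S_0<S_1<\cdots$ ``exhaust exactly the strict running maxima'' and ``genuinely list the successive record-setting epochs in order'' --- is false, and verifying it is not the right target. Between $S_k$ and $D_k$ the walk may climb well above $Y_{S_k}\cdot e_1$ and set many new strict records before finally dropping below $Y_{S_k}\cdot e_1$ at time $D_k$; this is precisely why $R_k=\sup\{Y_i\cdot e_1:0\leq i\leq D_k\}$ can exceed $Y_{S_k}\cdot e_1$ and why $S_{k+1}=T_{R_k+1}$ deliberately skips all of those intermediate records. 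So the identification ``$\sigma$ is a strict record, hence $\sigma=S_k$ for some $k$'' does not follow from record-counting alone.

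What actually closes the argument (and what the paper does) is the weaker statement that any strict record which is \emph{never subsequently undercut} must be one of the $S_k$, proved by locating $i_0$ with $S_{i_0}\leq\sigma<S_{i_0+1}$ and ruling out two cases: if $S_{i_0}<\sigma\leq D_{i_0}$ then $D_{i_0}<\infty$ and $Y_{D_{i_0}}\cdot e_1<Y_{S_{i_0}}\cdot e_1<Y_{\sigma}\cdot e_1$, so $\sigma$ \emph{is} undercut at time $D_{i_0}$, contradiction; if $D_{i_0}<\sigma<S_{i_0+1}$ then $Y_{\sigma}\cdot e_1>R_{i_0}$ forces $Y_\sigma\cdot e_1\geq R_{i_0}+1$ and hence $\sigma\geq T_{R_{i_0}+1}=S_{i_0+1}$, contradiction. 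This yields $\sigma=S_{i_0}$ with $D_{i_0}=\infty$, so $i_0\geq\kappa$ and $\tau_1=S_\kappa\leq S_{i_0}=\sigma$, which together with the first inclusion gives equality; note you do not need your stronger final step $k=\kappa$ (that every earlier $S_j$ has $D_j<\infty$), since $i_0\geq\kappa$ already suffices. With the bookkeeping claim replaced by this two-case exclusion, your proof is correct and matches the paper's.
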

\begin{proof} By the definition of two sequences $S_i$ and $D_i$ we have: $S_0=D_0=0<S_1<D_1<S_2<D_2<....$
Because $D_i,S_i$ are integers, then $\lim_{i\to\infty}S_i=\lim_{i\to\infty}D_i=+\infty.$ Let $$\tau'_1=\inf\{n\geq 0: \sup_{0\leq i<n}Y_i\cdot e_1<Y_n\cdot e_1\leq \inf_{n\leq i } Y_i\cdot e_1\},$$
we prove that $\tau_1=\tau'_1.$ Firstly, it is clear that $\tau_1\in\{n\geq 0: \sup_{0\leq i<n}Y_i\cdot e_1<Y_n\cdot e_1\leq \inf_{n\leq i } Y_i\cdot e_1\}$ so $\tau'_1\leq \tau_1.$ On the other hand, there exists an integer $i_0$ such that $S_{i_0}\leq \tau'_1<S_{i_0+1}.$ We prove that $\tau'_1=S_{i_0}.$ In fact, if $S_{i_0}< \tau'_1\leq D_{i_0}$ then by the definition of $\tau'_1,$ we have $Y_{S_{i_0}}\cdot e_1<Y_{D_{i_0}}\cdot e_1$, this is contrary to the fact that $Y_{S_{i_0}}\cdot e_1>Y_{D_{i_0}}\cdot e_1.$ If $D_{i_0}< \tau'_1< S_{i_0+1}$, then $R_{i_0}=\sup\{Y_{i}\cdot e_1: 0\leq i\leq D_{i_0}\}<Y_{\tau'_1}\cdot e_1$ and $R_{i_0}+1\leq Y_{\tau'_1}\cdot e_1.$ Hence $\tau'_1\in\{k\geq 1: Y_k\cdot e_1\geq R_{i_0}+1\}$ and by the definition of the sequence $\{S_i\}$, it implies that $S_{i_0+1}\leq \tau'_1$. This contradicts the supposition that $D_{i_0}< \tau'_1< S_{i_0+1}.$ So, it remains only $\tau'_1=S_{i_0.}$ By the definition of $\tau'_1$ we obtain $\bar{D}\circ\th_{S_{i_0}}=\infty$, then $D_{i_0}=\infty.$ This implies that $i_0\geq\ka=\inf\{n\geq 0: S_n<\infty, D_n=\infty\}$, and $\tau_1=S_{\ka}\leq S_{i_0}=\tau'_1.$
\end{proof}
On the existence of renewal times and the existence of the moments of all orders for $1$-ERW, we have the following key lemma proved in \cite{BR07}, \cite{MPRV12}:  
\begin{lem}\label{esttau}
Consider a $1$-ERW with bias $\be$, let $(\tau_k,k\geq 1)$ be the associated renewal times. Then, there exists $C,\al>0$ depending on $\be$ and such that for every $n\geq 1,$
$$\sup_{k\geq 0}\Pb_{\be}[\tau_{k+1}-\tau_k>n|\G_0^{(k)}]\leq Ce^{-n^{\al}}\; a.s.$$
In particular, for every $k\geq 0$ and $p\geq 1$, then $\tau_k<\infty\,, a.s.$ and $\Eb_{\be}[(\tau_{k+1}-\tau_k)^p]<\infty.$
\end{lem}
The lemma above gives an estimation of renewal times for every parameter $\be$ fixed. We know that, when $\be=0$, there does not exist the renewal times. We would like to estimate the renewal times when $\be$ converges to $0$. But this is an interesting and difficult question. The method using renewal times is used in many models to prove the law of large numbers and to prove the Einstein's relation. This problem in mathematical physic that is studied the first time by the greatest physician Albert Einstein, see \cite{Ein1905}. Recently this problem appears in the works of mathematicians, for example in \cite{BHOZ11}, \cite{GMP12},... Einstein's relation means to study the relation between the diffusion constant at the equilibrium state and the derivative of the speed of stochastic process at the critical point w.r.t. the balanced state. 

A property very important of renewal times is that, they cut a trajectory of the random walk into the independent increments as the following lemma (see \cite{BR07} and \cite{MPRV12}):
{\allowdisplaybreaks
\begin{lem}\label{indtau}
Under the probability $\Pb_{\be}$, the random variables $(X_{\tau_{k+1}}-X_{\tau_k},\tau_{k+1}-\tau_k)_{k\geq 1}$ and $(X_{\tau_1},\tau_1)$ are independent and $(X_{\tau_{k+1}}-X_{\tau_k},\tau_{k+1}-\tau_k)_{k\geq 1}$ have the same law of $(X_{\tau_1},\tau_1)$ under the probability $\Pb_{\be}$ conditionally on $\bar{D}=\infty$, write $\hat{\Pb}_{\be}(\cdot)=\Pb_{\be}(\cdot|\bar{D}=\infty).$
\end{lem}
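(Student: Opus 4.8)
The plan is to follow the classical renewal decomposition of Sznitman and Zerner, adapted to the excited walk: establish a single ``one step'' renewal identity at $\tau_1$ and then iterate it. The whole argument rests on the geometric picture furnished by Lemma~\ref{esstau}. By that lemma, for $i<\tau_1$ one has $Y_i\cdot e_1<Y_{\tau_1}\cdot e_1$, while for $i\ge\tau_1$ one has $Y_i\cdot e_1\ge Y_{\tau_1}\cdot e_1$. Hence the trajectory before $\tau_1$ lives strictly in the half space $\{y\cdot e_1<Y_{\tau_1}\cdot e_1\}$ and the trajectory from $\tau_1$ on lives in $\{y\cdot e_1\ge Y_{\tau_1}\cdot e_1\}$; the only site common to the two pieces is $Y_{\tau_1}$ itself, which is visited for the first time exactly at time $\tau_1$. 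This spatial separation is the crux: since the transition law of the excited walk at a given site depends only on the number of previous visits to that very site, the sites explored after $\tau_1$ are ``fresh'' and the post-$\tau_1$ walk cannot feel the past trajectory.

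First I would make this precise as a one step identity. Writing $\bar Y=(Y_{\tau_1+k}-Y_{\tau_1})_{k\ge0}$ for the shifted and recentred future path, I claim that for bounded measurable $F,G$,
\begin{equation}\label{onestep}
\Eb_\be\!\left[F(Y_{\tau_1},\tau_1)\,G(\bar Y)\right]=\Eb_\be\!\left[F(Y_{\tau_1},\tau_1)\right]\,\hat\Eb_\be\!\left[G\right],
\end{equation}
where $\hat\Eb_\be[\,\cdot\,]=\Eb_\be[\,\cdot\mid\bar D=\infty]$. To prove \eqref{onestep} I would decompose on the value of $\tau_1$ and on $\ka$, using $\{\tau_1=n\}=\bigcup_{k}\{\ka=k,\,S_k=n\}$ together with the fact that $\{S_k=n\}$ and the past $(Y_0,\dots,Y_n)$ are $\F_n$-measurable. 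On the event $\{\tau_1=n\}$, the constraint that $n$ be a renewal time is, given the past, exactly the statement that the future never steps strictly below level $Y_n\cdot e_1$, i.e.\ $\bar D\circ\th_n=\infty$. By the freshness observed above, the conditional law of $\bar Y$ given the whole past coincides with the law of a fresh excited walk $(Y_k)_{k\ge0}$ under $\Pb_\be$; imposing the renewal constraint then replaces $\Pb_\be$ by its conditioning on $\{\bar D=\infty\}$, which is precisely $\hat\Pb_\be$. Summing over $n$ (and $k$) yields \eqref{onestep}. The conditioning is legitimate because $\Pb_\be[\bar D=\infty]>0$ and $\tau_1<\infty$ a.s., both of which follow from Lemma~\ref{esttau}.

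Next I would iterate. Since $\tau_2-\tau_1$ and $Y_{\tau_2}-Y_{\tau_1}$ are functionals of $\bar Y$, identity \eqref{onestep} already shows that $(Y_{\tau_2}-Y_{\tau_1},\tau_2-\tau_1)$ together with the entire post-$\tau_1$ renewal structure is independent of $(Y_{\tau_1},\tau_1)$ and is governed by the fresh law conditioned on $\{\bar D=\infty\}$. Under $\hat\Pb_\be$ the shifted path is again a walk to which Lemma~\ref{esstau} applies, so applying \eqref{onestep} to it identifies the law of the first post-$\tau_1$ increment as that of $(Y_{\tau_1},\tau_1)$ under $\hat\Pb_\be$ and peels off one more renewal block. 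A straightforward induction on $k$, applying the one step identity to the successively shifted trajectories and using the tower property over the $\si$-algebras $\G_j^{(k)}$, then gives that $(Y_{\tau_{k+1}}-Y_{\tau_k},\tau_{k+1}-\tau_k)_{k\ge1}$ are i.i.d.\ with the common law of $(Y_{\tau_1},\tau_1)$ under $\hat\Pb_\be$ and are jointly independent of $(Y_{\tau_1},\tau_1)$.

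The main obstacle is the rigorous justification of the freshness step used in \eqref{onestep}: one must check that, conditionally on the $\F_n$-measurable past and on $\{S_k=n,\ka=k\}$, the future increments really are distributed as a fresh excited walk, independently of which past trajectory occurred. This requires matching the visit-count ``state'' seen by the future path---trivial strictly above level $Y_{\tau_1}\cdot e_1$, and equal to one visit at $Y_{\tau_1}$, which coincides with the state of a fresh walk at its starting point---and carefully tracking measurability with respect to the nested stopping-time filtration $\{\G_j^{(k)}\}$. Once this conditional-independence statement is established, the summation over $n$ and the induction are routine.
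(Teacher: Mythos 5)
The paper does not actually prove this lemma: it is quoted from \cite{BR07} and \cite{MPRV12}, so there is no internal proof to compare your attempt against. Your outline is the standard Sznitman--Zerner renewal argument that those references adapt to the excited walk, and it is essentially sound: the decomposition over $\{S_k=n\}\cap\{\kappa=k\}$, the factorized one-step identity, and the induction over renewal blocks via the $\G_j^{(k)}$ are exactly the right structure, and you correctly isolate the freshness step as the technical crux. One sentence is stated too strongly, however: it is \emph{not} true that ``the conditional law of $\bar Y$ given the whole past coincides with the law of a fresh excited walk'' --- on trajectories where the future dips below the level $Y_{\tau_1}\cdot e_1$ it revisits pre-$\tau_1$ sites, and its transition probabilities there depend on the past, so the unconditioned future law does feel the history. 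What is true, and what your summation actually needs, is the restricted statement that on $\{S_k=n\}\cap\{D_j<\infty,\ j<k\}$ one has, for every path event $B$,
$$\Pb_\be\bigl(\bar Y\in B,\ \bar D\circ\th_n=\infty\mid\F_n\bigr)=\Pb_\be\bigl(B\cap\{\bar D=\infty\}\bigr),$$
proved by computing cylinder probabilities of finite path segments confined to the half-space $\{y\cdot e_1\ge Y_n\cdot e_1\}$ (where all visit counts are determined by the post-$n$ path alone, with $Y_n$ itself carrying exactly one visit, matching a fresh start) and letting the horizon tend to infinity. Since you flag precisely this point as the remaining obstacle, the proposal amounts to a correct reconstruction of the cited proof once that clause is weakened accordingly.
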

From Lemmas \ref{esttau} et \ref{indtau} and with the notation $\tau=\tau_1$, we have $\Pb_{\be}[\tau\geq n]<Ce^{-n^{\al}}.$
} Note that Lemma \ref{indtau} is not anymore true for the model of generalized excited random walk (see \cite{MPRV12}), and also for the case the definition of renewal times is modified as in \cite{She03}, \cite{GMP12}. We want to estimate the moments of $\tau$ as a function of $\be$ by the question that there exists an integer $k$ such that $\sup_{\be\in(0,1]}\be^k\hat{\Eb}_{\be}\tau<\infty$, or a better estimation $\sup_{\be\in(0,1]}\be^{2k}\hat{\Eb}_{\be}\tau^2<\infty$? We are interested in the case $k=2$, and we would like to find a definition of renewal times to obtain the estimation of $k=2.$ With the definition \ref{tau}, it is difficult to estimate $\tau$. It is useful to change a little the definition of $\tau$. For example, in \cite{GMP12} the random walk is allowed to come back behind the hyperplane (in Lamma \ref{esstau}) of a distant $\lam=\frac{\ep}{\be}$, this means that we redefine $$\bar{D}=\inf\{m\geq 0: Y_m\cdot e_1<Y_0\cdot e_1-\lam\}.$$ With this change, for Markov process, Lemma \ref{indtau} is still true, but for $1$-ERW and non-Markovian process, it is not anymore true. This is a difficulty when we want to study non-Markovian process by using renewal times.

\section{Proof of Theorem \ref{smallD}}
We repeat some necessary notations .
\begin{itemize}
\item $(Y_n)_{n\in\Zb}$ are the cordinate maps on $\Zb^d$ and $\Pb_{\be}$ is the law of $1$-ERW. The speed is $v=v(\be),$ 
\item $\{\tau_n\}$ is the sequence of renewal times,
\item $X_n=Y_n\cdot e_1,\ Z_n=(Y_n\cdot e_2,Y_n\cdot e_3,...,Y_n\cdot e_d),\ \E_n=X_{n+1}-X_n$,
\item $\bar{\E}_n=\E_n-\Eb_{\be} \E_n,\ \E'_n=\E_n-v,\ V_n=\sum_{j=0}^{n-1}\frac{\E_j1_{Y_j\notin}}{1+\be\E_j},$
\item The speed at the time $n$, the speed of $1$-ERW and the derivative of the speed at the time $n$ respectivly are $$v_n(\be)=\Eb_{\be}\left(\frac{X_n}{n}\right),\ v(\be)=\frac{\hat{\Eb}_{\be} X_{\tau}}{\hat{\Eb}_{\be}{\tau}},\text{ and } \frac{\partial v_n}{\partial \be}=\frac{\Eb_{\be} (X_nV_n)}{n},$$
\item $\bar{X}_n=\sum_{j=0}^{n-1}\bar{\E}_j,\ X'_n=\sum_{j=0}^{n-1}\E'_j,\ a=\hat{\Eb}_{\be}\tau.$
\end{itemize}
\subsection{The existence of the derivative of the speed for $\be>0$}
\begin{rem}
Using renewal times, we have that
$$\lim_{n\to\infty}v_n(\be)=v(\be),\ 0=\lim_{n\to\infty}\Eb_{\be}\left(\frac{\bar{X}_n}{n}\right)=\frac{\hat{\Eb}_{\be}\bar{X}_{\tau}}{\hat{\Eb}_{\be}\tau}=\frac{\hat{\Eb}_{\be}(X_{\tau}-\sum_{j=0}^{\tau-1}\Eb_{\be}\E_j)}{\hat{\Eb}_{\be}\tau},$$
$$\Pb_{\be}-a.s.\text{ then }\,\lim_{n\to\infty}\frac{X'_n}{n}=\frac{\hat{\Eb}_{\be}X'_{\tau}}{\hat{\Eb}_{\be}\tau}=\frac{\hat{\Eb}_{\be}(X_{\tau}-v\tau)}{\hat{\Eb}_{\be}\tau}=0.$$
\end{rem}
We deduce from these equalities that $\hat{\Eb}_{\be}\bar{X}_{\tau}=0$ and $\hat{\Eb}_{\be}{X}_{\tau}=\hat{\Eb}_{\be}[\sum_{j=0}^{\tau-1}(\Eb_{\be}\E_j)].$
\subsubsection{The existence of the limits of the derivatives at finite times}
To prove the point $1$ of Theorem \ref{smallD}, we need the following lemmas:
\begin{lem}\label{Lem31}
\begin{align}
&\sup_{n\geq 1}\frac{\Eb_{\be}\left(\max_{0\leq i\leq n}{|X'_i|^2}\right)}{n}:=C_1(\be)<+\infty\\
&\sup_{n\geq 1}\frac{\Eb_{\be}\left(\max_{0\leq i\leq n}|\bar{X}_i|^2\right)}{n}:=C_2(\be)<+\infty
\end{align}
\end{lem}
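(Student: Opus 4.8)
The plan is to exploit the renewal decomposition of Lemma~\ref{indtau} together with the exponential tail bound of Lemma~\ref{esttau}, which guarantees that all moments of the block lengths are finite. Set $\tau_0=0$ and, for $k\ge 0$, write $W_k=\tau_{k+1}-\tau_k$ and $U_k=X_{\tau_{k+1}}-X_{\tau_k}$ for the time and displacement increments over the $k$-th renewal block. By Lemma~\ref{indtau} the pairs $(U_k,W_k)_{k\ge1}$ are i.i.d.\ with the $\hat{\Pb}_{\be}$-law of $(X_{\tau},\tau)$ and are independent of the first block $(U_0,W_0)=(X_{\tau_1},\tau_1)$, and by Lemma~\ref{esttau} every moment $\Eb_{\be}[W_k^p]$ is finite. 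Since the walk is nearest neighbour, $|U_k|\le W_k$, and since $v=\hat{\Eb}_{\be}X_{\tau}/\hat{\Eb}_{\be}\tau$ we obtain $\Eb_{\be}[U_k-vW_k]=\hat{\Eb}_{\be}X_{\tau}-v\,\hat{\Eb}_{\be}\tau=0$ for $k\ge1$. Thus $A_k:=U_k-vW_k$, $k\ge1$, are i.i.d.\ centred with $\Eb_{\be}[A_k^2]\le(1+v)^2\Eb_{\be}[W_1^2]<\infty$.

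First I would prove the bound for $X'_n=X_n-vn$. Writing $m(n)=\max\{k:\tau_k\le n\}$ and noting that $\tau_k\ge k$ forces $m(n)\le n$, I would split the maximum into its values at renewal times and the within-block oscillation. For $\tau_k\le i<\tau_{k+1}$ one has $|X'_i-X'_{\tau_k}|\le(1+v)(i-\tau_k)\le 2W_k$, so
\begin{equation*}
\max_{0\le i\le n}|X'_i|\le\max_{0\le k\le m(n)}|X'_{\tau_k}|+2\max_{0\le k\le m(n)}W_k .
\end{equation*}
Because $X'_{\tau_k}=(X_{\tau_1}-v\tau_1)+\sum_{j=1}^{k-1}A_j$ for $k\ge1$ and $S_k:=\sum_{j=1}^{k}A_j$ is a mean-zero random walk, hence a martingale for its natural filtration, Doob's $L^2$ maximal inequality gives $\Eb_{\be}[\max_{1\le k\le n}S_k^2]\le 4n\,\Eb_{\be}[A_1^2]$, while $\Eb_{\be}[(X_{\tau_1}-v\tau_1)^2]<\infty$; together with $m(n)\le n$ this controls the first term by $Cn$. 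For the oscillation term I would use $(\max_{0\le k\le m(n)}W_k)^2\le W_0^2+\sum_{k=1}^{n}W_k^2$, whose expectation is at most $\Eb_{\be}[\tau_1^2]+n\,\Eb_{\be}[W_1^2]\le Cn$. Combining the two bounds and dividing by $n$ yields $\sup_n \Eb_{\be}[\max_{0\le i\le n}|X'_i|^2]/n=C_1(\be)<\infty$.

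For the second estimate I would avoid a separate decomposition by relating $\bar{X}_n$ to $X'_n$. Since $\bar{X}_n=X_n-\Eb_{\be}X_n=X'_n-\Eb_{\be}X'_n$, the centring correction is deterministic and, by the first estimate and Jensen's inequality, $|\Eb_{\be}X'_i|\le\sqrt{\Eb_{\be}|X'_i|^2}\le\sqrt{C_1(\be)\,i}\le\sqrt{C_1(\be)\,n}$ for $0\le i\le n$. Hence $\max_{0\le i\le n}|\bar{X}_i|\le\max_{0\le i\le n}|X'_i|+\sqrt{C_1(\be)\,n}$, and squaring, taking expectations and dividing by $n$ gives $C_2(\be)\le 4C_1(\be)<\infty$.

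The technical heart of the argument is the passage from the deterministic maximum over $\{0,\dots,n\}$ to the random number $m(n)$ of renewal blocks. The two observations that render this harmless are the deterministic inequality $m(n)\le n$ and the uniform within-block Lipschitz control $|U_k|\le W_k$ coming from the nearest-neighbour property, which together let me replace the random index by $n$ before invoking Doob's inequality and the i.i.d.\ moment bounds. I expect the delicate points to be precisely the verification that $A_k$ is genuinely centred (namely the renewal formula $v=\hat{\Eb}_{\be}X_{\tau}/\hat{\Eb}_{\be}\tau$) and that all the block moments invoked are finite; the latter is exactly what Lemma~\ref{esttau} supplies, and everything else is routine.
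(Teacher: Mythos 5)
Your proof is correct and rests on the same two pillars as the paper's: the renewal decomposition of Lemmas \ref{esttau} and \ref{indtau} to get i.i.d.\ centred block increments with all moments finite, and Doob's $L^2$ maximal inequality applied to the resulting martingale, with the within-block oscillation absorbed by the nearest-neighbour bound $|X_i-X_{\tau_k}|\le i-\tau_k$. The execution differs in two minor but genuinely useful ways. First, the paper bounds $\max_{0\le i\le [na]}|X'_i|^2$ with $n$ renewal blocks and then interpolates to arbitrary horizons via a sequence $p_n$ with $[p_na]\le n<[(p_n+1)a]$; you instead index by the random block count $m(n)=\max\{k:\tau_k\le n\}$ and use the deterministic inequality $m(n)\le n$ (from $\tau_k\ge k$) to replace the random index before invoking Doob, which removes the interpolation step entirely. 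You are also slightly more careful than the paper in splitting off the first block $(X_{\tau_1}-v\tau_1)$, whose $\Pb_\be$-mean need not vanish, before applying the maximal inequality to the genuinely centred walk $\sum_{j\ge1}A_j$. Second, for $\bar X_n$ the paper merely asserts that the argument is ``similar,'' whereas you deduce $C_2(\be)\le 4C_1(\be)$ directly from the first bound via $\bar X_n=X'_n-\Eb_\be X'_n$ and $|\Eb_\be X'_i|\le\sqrt{C_1(\be)\,n}$; this is a clean shortcut that avoids redoing the renewal argument. Both routes give the same conclusion; yours is somewhat tighter in its bookkeeping.
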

\begin{proof}
Firstly, we prove that 
$$\sup_{n\geq 1}\frac{\Eb_{\be}\left(\max_{0\leq i\leq [na]}{|X'_i|^2}\right)}{n}:=C'_1(\be)<+\infty\text{ where }a=\hat{\Eb}_{\be}\tau.$$
Let $S'_i=X'_{\tau_i}$, then
$$\max_{0\leq i\leq [na]}{{X'_i}^2}\leq\max_{0\leq i\leq [na]}{{S'_i}^2}+(\tau_n-[na])^2+\sum_{j=0}^{n-1}(\tau_{j+1}-\tau_j)^2.$$
Because that $\max_{0\leq i\leq [na]}{{X'_i}^2}$ attains max at $i_0$ then either $i_0\in[\tau_n,[na]]$ or there exists $j_0$  such that $i_0\in[\tau_{j_0},\tau_{j_0+1}).$
Since $$(\tau_n-[na])^2=[(\tau_n-na+na-[na])]^2\leq 2[(\tau_n-na)^2+1],$$ we get
$$\Eb_{\be}\left(\max_{0\leq i\leq [na]}{|X'_i|^2}\right)\leq\max_{0\leq i\leq n}{S'_i}^2+2\Eb_{\be}(\tau^2)+2(n-1)\hat{\Eb}_{\be}(\tau-a)^2+(n-1)\hat{\Eb}_{\be}(\tau^2)+2.$$
Note that $\{S'_i\}$ is the martingale then 
$$\Eb_{\be}(\max_{0\leq i\leq n}{{S'_i}^2})\leq 4\Eb_{\be}\left({S'_n}^2\right)=4\Eb_{\be}({X'_{\tau}}^2)+4(n-1)\hat{\Eb}_{\be}({X'_\tau}^2)\leq 4\Eb_{\be}({{\tau}}^2)+4(n-1)\hat{\Eb}_{\be}({\tau}^2) .$$
Therefore,
$$\sup_{n\geq 1}\frac{\Eb_{\be}\left(\max_{0\leq i\leq [na]}{|X'_i|^2}\right)}{n}\leq\sup_{n\geq 1}\left( \frac{4n+2}{n}\hat{\Eb}_{\be}({\tau}^2)+ \frac{2n-2}{n}\hat{\Eb}_{\be}(\tau-a)^2+\frac{2}{n}\right)<+\infty.$$
We now consider the sequence of integers $\{p_n\}$ such that 
$[p_na]\leq n<[(p_n+1)a]$ then $n/p_n\to a$.
We deduce that
$$\sup_{n\geq 1}\frac{\Eb_{\be}\left(\max_{0\leq i\leq n}{|X'_i|^2}\right)}{n}\leq \sup_{n\geq 1}\frac{\Eb_{\be}\left(\max_{0\leq i\leq [(p_n+1)a]}{|X'_i|^2}\right)}{(p_n+1)}\times \frac{(p_n+1)}{n}\leq \infty.$$
It is similar to prove that
$$\sup_{n\geq 1}\frac{\Eb_{\be}\left(\max_{0\leq i\leq n}{\bar{X}_i^2}\right)}{n}=C_2(\be)<+\infty;$$
$$\sup_{n\geq 1}\frac{\Eb_{\be}\left(\max_{0\leq i\leq n}{V_i^2}\right)}{n}=C_3(\be)<+\infty.$$
\end{proof}
\begin{lem}
\begin{align}
&\sup_{n,p\geq 1}\frac{\Eb_{\be}\left(\max_{0\leq i\leq p}{\left({X'_{\tau_n+i}-X'_{\tau_n}}\right)^2}\right)}{p}=C_4(\be)<+\infty\\
&\sup_{n\geq 1}\sup_{0<p\leq [n/a]}\frac{\Eb_{\be}\left(\max_{0\leq i\leq p}{\left({X'_{\tau_n}-X'_{\tau_n-i}}\right)^2}\right)}{p}=C_5(\be)<+\infty\label{Lem32}.
\end{align}
We have similarly the result for the sequences $\{\bar{X}_n\}$ and $\{V_n\}$.
\end{lem}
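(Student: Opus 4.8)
The plan is to reduce both maximal inequalities to the renewal block decomposition already used in Lemma \ref{Lem31}, exploiting the i.i.d.\ structure of Lemma \ref{indtau} and the exponential tails of Lemma \ref{esttau}. For the forward bound I would first observe that, by Lemma \ref{indtau}, for every $n\ge 1$ the law of the restarted centered process $(X'_{\tau_n+i}-X'_{\tau_n})_{i\ge 0}$ is the same, since it is a functional only of the renewal blocks $n,n+1,\dots$, which are i.i.d.\ with the common $\hat{\Pb}_{\be}$-block law. Consequently the expectation $\Eb_{\be}\bigl[\max_{0\le i\le p}(X'_{\tau_n+i}-X'_{\tau_n})^2\bigr]$ does not depend on $n$, and bounding its ratio to $p$ is exactly the computation of Lemma \ref{Lem31} carried out for the restarted walk: decompose $[0,p]$ into renewal blocks, control the values at block endpoints through the martingale $\{S'_i\}$ by Doob's inequality, and dominate the overshoot inside the last block by $(\tau_{k+1}-\tau_k)^2$. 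Since all these moments are finite by Lemma \ref{esttau}, the supremum over $n,p$ is finite, giving $C_4(\be)$.

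The backward bound is more delicate because conditioning on the trajectory up to $\tau_n$ restarts the future, not the past. Fix $0<p\le[n/a]$ and, for $0<i\le p$, let $k$ be the block index with $\tau_k\le\tau_n-i<\tau_{k+1}$; writing $\xi_m=X'_{\tau_{m+1}}-X'_{\tau_m}$ I would split
\[X'_{\tau_n}-X'_{\tau_n-i}=\sum_{m=k+1}^{n-1}\xi_m+\bigl(X'_{\tau_{k+1}}-X'_{\tau_n-i}\bigr).\]
The crucial observation is that the block increments $\xi_m$ are i.i.d.\ and centered (recall $\hat{\Eb}_{\be}X'_{\tau}=0$), hence so is their reversal $(\xi_{n-1},\xi_{n-2},\dots)$; and since each block has length at least $1$, a time window of length $p$ meets at most $p$ complete blocks. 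Therefore the complete-block part is dominated by $\max_{0\le l\le p}\bigl(\sum_{m=n-l}^{n-1}\xi_m\bigr)^2$, to which Doob's maximal inequality for the reversed i.i.d.\ walk applies and yields an expectation $\le 4p\,\hat{\Eb}_{\be}\bigl[(X'_{\tau})^2\bigr]$. The residual within-block term is at most $2(\tau_{k+1}-\tau_k)$, so its square is bounded by $4\max_{n-p\le k\le n-1}(\tau_{k+1}-\tau_k)^2$, with expectation $\le 4p\,\hat{\Eb}_{\be}\tau^2$. Dividing by $p$ and taking the supremum gives $C_5(\be)<+\infty$; the constraint $p\le[n/a]$, together with $\tau_m/m\to a$, is precisely what forces the spanned block indices $n-j_0,\dots,n-1$ to remain positive, so that the reversal is legitimate.

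The step I expect to be the main obstacle is exactly this backward estimate: one cannot invoke a conditional restart, and must instead recognize that the i.i.d.\ block increments are exchangeable, so that reading them in reverse still produces a centered i.i.d.\ walk amenable to Doob's inequality, while the crude count $j_0\le p$ of spanned blocks (rather than the sharp $p/a$) keeps the argument clean. Finally, the assertions for $\{\bar{X}_n\}$ and $\{V_n\}$ follow by the identical scheme: $\bar{X}_n$ is a sum of centered increments $\bar{\E}_j$ with $|\bar{\E}_j|\le 2$, and $V_n$ is a sum of the terms $\frac{\E_j1_{Y_j\notin}}{1+\be\E_j}$, each bounded by $1/(1-\be)$, so both are compatible with the renewal decomposition and have block increments with finite moments of every order, yielding the same forward and backward maximal bounds.
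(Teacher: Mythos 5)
Your proposal follows essentially the same route as the paper: both bounds are reduced to the i.i.d.\ renewal-block structure of Lemma \ref{indtau}, with Doob's maximal inequality applied to the (reversed) martingale of centered block sums $X'_{\tau_n}-X'_{\tau_{n-i}}$ and the within-block overshoot dominated by squared block lengths, whose moments are finite by Lemma \ref{esttau}. The only cosmetic differences are that the paper handles the forward bound by majorizing $\hat{\Eb}_{\be}$ by $\Eb_{\be}/\Pb(\bar{D}=\infty)$ and citing Lemma \ref{Lem31} directly, and for the backward bound works with $\max_{0\leq i\leq [pa]}$ and then rescales via an auxiliary sequence, whereas you use the cruder but equally valid count of at most $p$ blocks in a window of length $p$.
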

\begin{proof}
From Lemma \ref{Lem31} we get 
\begin{align*}
&\sup_{n\geq 1}\frac{\hat{\Eb}_{\be}\left(\max_{0\leq i\leq n}{{X'_i}^2}\right)}{n}\leq\sup_{n\geq 1}\frac{\Eb_{\be}\left(\max_{0\leq i\leq n}{{X'_i}^21_{D=0}}\right)}{n\Pb(D=0)}\\
&\leq \frac{1}{\Pb(D=0)}\sup_{n\geq 1}\frac{\Eb_{\be}\left(\max_{0\leq i\leq n}{{X'_i}^2}\right)}{p}<+\infty.
\end{align*}
Therefore
\begin{align*}
\sup_{n,p\geq 1}\frac{\Eb_{\be}\left(\max_{0\leq i\leq p}{\left({X'_{\tau_n+i}-X'_{\tau_n}}\right)^2}\right)}{n}=\sup_{n\geq 1}\sup_{p\geq 1}\frac{\hat{\Eb}_{\be}\left(\max_{0\leq i\leq p}{\left({X'_{i}}\right)^2}\right)}{n}
\end{align*}
To prove (\ref{Lem32}), we consider
$$\sup_{0<p\leq [n/a]}\frac{\Eb_{\be}\left(\max_{0\leq i\leq [pa]}{\left({X'_{\tau_n}-X'_{\tau_n-i}}\right)^2}\right)}{p}$$
For $0<p\leq [n/a]$ then $[pa]\leq pa\leq (n/a).a=n.$ Set $S'_i=X'_{\tau_n}-X'_{\tau_{n-i}}$ so that
$$\max_{0\leq i\leq [pa]}{\left({X'_{\tau_n}-X'_{\tau_n-i}}\right)^2}\leq \max_{0\leq i\leq p}{S'_i}^2+\sum_{j=0}^{p-1}(\tau_{n-j}-\tau_{n-j-1})^2+(\tau_n-\tau_{n-p}-[pa])^2.$$
We deduce from the inequality above that
{\allowdisplaybreaks
\begin{align*}
&\Eb_{\be}\left[\max_{0\leq i\leq [pa]}{\left({X'_{\tau_n}-X'_{\tau_n-i}}\right)^2}\right]\\
&\leq \Eb_{\be}\left(\max_{0\leq i\leq p}{S'_i}^2\right)+\sum_{j=0}^{p-1}\Eb_{\be}(\tau_{n-j}-\tau_{n-j-1})^2+\Eb_{\be}(\tau_n-\tau_{n-p}-[pa])^2\\
&\leq 4\Eb_{\be}({S'_p}^2)+p\hat{\Eb}_{\be}(\tau^2)+2p\Eb_{\be}[(\tau-a)^2]+2\\
&\leq 4p\hat{\Eb}_{\be}({X'_{\tau}}^2)+p\hat{\Eb}_{\be}(\tau^2)+2p\Eb_{\be}[(\tau-a)^2]+2.
\end{align*}}
Therefore,
$$\sup_{p\geq 1}\frac{\Eb_{\be}\left(\max_{0\leq i\leq [pa]}{\left({X'_{\tau_n}-X'_{\tau_n-i}}\right)^2}\right)}{p}:=C(\be)<\infty.$$
Let $p\leq [n/a]$ and $0\leq i\leq p$, there exists a sequence $\{p_n\}$ such that $[p_na]< p\leq[(p_n+1)a]$. Because that $a\geq 1$ and $[p\leq [\frac{n}{a}]\leq[[\frac{n}{a}]a]\leq \frac{n}{a}.a=n$ then $[(p_n+1)a]\leq[[\frac{n}{a}]a]\leq n.$ 
So, we have
\begin{align*}
&\sup_{p\geq 1}\frac{\Eb_{\be}\left(\max_{0\leq i\leq [(p_n+1)a]}{\left({X'_{\tau_n}-X'_{\tau_n-i}}\right)^2}\right)}{p}\\
& \leq\sup_{p\geq 1}\frac{\Eb_{\be}\left(\max_{0\leq i\leq [(p_n+1)a]}{\left({X'_{\tau_n}-X'_{\tau_n-i}}\right)^2}\right)}{p_n+1}.\frac{p_n+1}{p}\\
&\leq C(\be).\frac{p+a}{ap}\leq C'(\be)<+\infty.
\end{align*}\end{proof}
\begin{lem}
$\lim_{n\to+\infty}\left|\frac{1}{n}\Eb_{\be}(X'_{\tau_n}V_{\tau_n})-\frac{1}{n}\Eb_{\be}(X'_{[na]}V_{[na]})\right|=0.$
\end{lem}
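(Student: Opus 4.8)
The plan is to estimate the two expectations by Cauchy--Schwarz after writing their difference as a sum in which, in each term, one factor is a genuine fluctuation that is $o(\sqrt n)$ in $L^2(\Pb_\be)$ while its companion factor stays $O(\sqrt n)$. Concretely, I would start from the algebraic identity
$$X'_{\tau_n}V_{\tau_n}-X'_{[na]}V_{[na]}=\bigl(X'_{\tau_n}-X'_{[na]}\bigr)V_{\tau_n}+X'_{[na]}\bigl(V_{\tau_n}-V_{[na]}\bigr),$$
and then bound, with $\|\cdot\|_2$ the $L^2(\Pb_\be)$ norm,
$$\bigl|\Eb_{\be}\bigl[(X'_{\tau_n}-X'_{[na]})V_{\tau_n}\bigr]\bigr|\le\|X'_{\tau_n}-X'_{[na]}\|_2\,\|V_{\tau_n}\|_2,\qquad \bigl|\Eb_{\be}\bigl[X'_{[na]}(V_{\tau_n}-V_{[na]})\bigr]\bigr|\le\|X'_{[na]}\|_2\,\|V_{\tau_n}-V_{[na]}\|_2.$$
Dividing by $n$ and letting $n\to\infty$ will give the claim as soon as each difference factor is $o(\sqrt n)$ and each companion factor is $O(\sqrt n)$.

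The companion factors are the easy part. From Lemma \ref{Lem31} one has $\|X'_{[na]}\|_2^2\le \Eb_\be(\max_{0\le i\le[na]}|X'_i|^2)\le C_1(\be)\,[na]=O(n)$. For $\|V_{\tau_n}\|_2$ I would use that $V_n$ is a martingale under $\Pb_\be$ (each summand $\E_j\ind_{Y_j\notin}/(1+\be\E_j)$ is conditionally centered, as in the computation already used in the introduction), so by Lemma \ref{indtau} the renewal increments $V_{\tau_{k+1}}-V_{\tau_k}$ are i.i.d.\ and centered; hence $\Eb_\be[V_{\tau_n}^2]=\sum_{k=1}^n \Eb_\be[(V_{\tau_k}-V_{\tau_{k-1}})^2]=O(n)$, i.e.\ $\|V_{\tau_n}\|_2=O(\sqrt n)$. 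Thus both companion factors are $O(\sqrt n)$.

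The heart of the argument is to show the difference factors are $o(\sqrt n)$; I will treat $\|X'_{\tau_n}-X'_{[na]}\|_2$ in detail, the $V$-case being identical after replacing $C_4,C_5$ by their $V$-analogues recorded in (\ref{Lem32}). Let $m=m(n)$ count the completed renewal blocks by time $[na]$, i.e.\ $\tau_m\le[na]<\tau_{m+1}$, and split
$$X'_{\tau_n}-X'_{[na]}=\bigl(X'_{\tau_n}-X'_{\tau_m}\bigr)-\bigl(X'_{[na]}-X'_{\tau_m}\bigr).$$
The second, within-block term satisfies $|X'_{[na]}-X'_{\tau_m}|\le 2(\tau_{m+1}-\tau_m)$ since the increments $\E'_j$ are bounded, so its square has finite expectation by the exponential tail of Lemma \ref{esttau}; it contributes only $O(1)$ and is negligible after division by $n$. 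The first term equals $S'_n-S'_m$ for the martingale $S'_k=X'_{\tau_k}$, whose increments are i.i.d.\ centered with finite variance $\sigma^2$; since $m+1$ is a stopping time for the renewal filtration, a Wald second-moment identity yields $\Eb_\be[(S'_n-S'_m)^2]\le C\,\Eb_\be|n-m|+O(1)$.

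It then remains to prove $\Eb_\be|n-m|=O(\sqrt n)$, and this is the main obstacle. Here $m$ is the value at $[na]$ of the delayed renewal counting process $t\mapsto\max\{k:\tau_k\le t\}$, and I would combine the i.i.d.\ block structure (Lemma \ref{indtau}) with the uniform moment bounds (Lemma \ref{esttau}) to get, by standard renewal theory, $\Eb_\be[m]=n+O(1)$ and $\Var_\be(m)=O(n)$, hence $\|m-n\|_2=O(\sqrt n)$ and in particular $\Eb_\be|n-m|=O(\sqrt n)$. Feeding this back gives $\|X'_{\tau_n}-X'_{[na]}\|_2=O(n^{1/4})=o(\sqrt n)$, and likewise $\|V_{\tau_n}-V_{[na]}\|_2=o(\sqrt n)$. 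Each of the two expectations above is then $o(\sqrt n)\cdot O(\sqrt n)=o(n)$, and dividing by $n$ finishes the proof. The two delicate points to verify carefully are the Wald identity for $S'_n-S'_m$ (handling the dependence of $m$ on the martingale increments via the stopping time $m+1$) and the second-moment control $\Var_\be(m)=O(n)$ of the renewal counting process, which is exactly where the uniform exponential tail of Lemma \ref{esttau} enters.
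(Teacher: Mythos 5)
Your argument is correct in outline but takes a genuinely different route from the paper in the one step that matters. Both proofs reduce, via Cauchy--Schwarz and the $O(n)$ bounds on $\Eb_\be[{X'_{\tau_n}}^2]$ and $\Eb_\be[V_{\tau_n}^2]$, to showing $\Eb_\be[(X'_{\tau_n}-X'_{[na]})^2]=o(n)$ and its $V$-analogue; your two-term product decomposition versus the paper's three-term one is immaterial. For that key estimate the paper splits on the event $\{|\tau_n-[na]|\ge\ep n\}$, kills the bad event using the fourth moment of $\tau_n-na$ together with the law of large numbers for $\tau_n/n$, and on the good event invokes the maximal inequalities $C_4(\be),C_5(\be)$ of the preceding lemma over windows of length $\ep n$ around $\tau_n$, obtaining a bound $\ep C_4(\be)+o(1)$ with $\ep$ arbitrary --- this is exactly what that lemma was proved for, and it requires nothing about the renewal counting process. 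You instead introduce the index $m$ with $\tau_m\le[na]<\tau_{m+1}$ and run a Wald second-moment identity together with $\Eb_\be|m-n|=O(\sqrt n)$; this is sound (the cross terms vanish because $\{m+1\le j\}$ is measurable with respect to the first $j$ renewal blocks while later block increments are independent and centered, by Lemmas \ref{esttau} and \ref{indtau}), and it buys a quantitative rate $\|X'_{\tau_n}-X'_{[na]}\|_2=O(n^{1/4})$ that the paper's soft argument does not give. The price is two extra pieces of renewal theory that you must actually prove rather than cite: the variance bound for the counting process, which you do flag, and --- the one point you state too casually --- the bound $\Eb_\be[(\tau_{m+1}-\tau_m)^2]=O(1)$ for the block straddling the deterministic time $[na]$. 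That block is size-biased, so its second moment is not $\hat{\Eb}_\be[\tau^2]$; you need to write $\Eb_\be[(\tau_{m+1}-\tau_m)^2]=\sum_k\Eb_\be[(\tau_{k+1}-\tau_k)^2 1_{\tau_k\le[na]<\tau_{k+1}}]$ and control each summand by conditioning on $\G_0^{(k)}$ and using the uniform tail of Lemma \ref{esttau}. All of this can be carried out, so your proof goes through, but it is heavier machinery than the paper's truncation argument.
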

\begin{proof}
Using the inequality $|(a+\de)(b+\de)-ab|\leq |a\de|+|b\de|+|\de^2|$, we have that
\allowdisplaybreaks
{
\begin{align*}
&\left|\frac{1}{n}\Eb_{\be}(X'_{\tau_n}V_{\tau_n})-\frac{1}{n}\Eb_{\be}(X'_{[na]}V_{[na]})\right|
\leq\frac{1}{n}\Eb_{\be}\left|\sum_{j=[na]}^{\tau_n-1}\E'_j\right|.|V_{\tau_n}|\\&+\frac{1}{n}\Eb_{\be}\left(|X'_{\tau_n}|.\left|\sum_{j=[na]}^{\tau_n-1}\frac{\E_j1_{Y_j\notin}}{1+\be\E_j}\right|\right)+\frac{1}{n}\Eb_{\be}\left|\left(\sum_{j=[na]}^{\tau_n-1}\E'_j\right).\left(\sum_{k=[na]}^{\tau_n-1}\frac{\E_j1_{Y_j\notin}}{1+\be\E_j}\right)\right|\\
&\leq \sqrt{\frac{1}{n}\Eb_{\be}\left[\left(\sum_{j=[na]}^{\tau_n-1}\E'_j\right)^2\right]}.\sqrt{\frac{1}{n}\Eb_{\be}[(V_{\tau_n})^2}+\sqrt{\frac{1}{n}\Eb_{\be}\left[\left(\sum_{j=[na]}^{\tau_n-1}\frac{\E_j1_{Y_j\notin}}{1+\be\E_j}\right)^2 \right]}.\sqrt{\frac{1}{n}\Eb_{\be}({X'_{\tau_n}}^2)}\\
&+\sqrt{\frac{1}{n}\Eb_{\be}\left[\left(\sum_{j=[na]}^{\tau_n-1}\E'_j\right)^2\right]}.\sqrt{\frac{1}{n}\Eb_{\be}\left[\left(\sum_{j=[na]}^{\tau_n-1}\frac{\E_j1_{Y_j\notin}}{1+\be\E_j}\right)^2 \right]}
\end{align*}
}
There exist two finite constants $C(\be),C'(\be)$ depending only on $\be$ such that
\begin{itemize}

\item$\text{For all } n\geq 1 \text{ then } \frac{1}{n}\Eb_{\be}(V_{\tau_n}^2)=\frac{1}{n}\Eb_{\be}(V_{\tau}^2)+\frac{n-1}{n}\hat{\Eb}_{\be}(V_{\tau}^2)\leq C(\be);$
\item$\text{For all }n\geq 1\text{ then } \frac{1}{n}\Eb({X'_{\tau_n}}^2)=\frac{1}{n}\Eb_{\be}({X'_{\tau}}^2)+\frac{n-1}{n}\hat{\Eb}_{\be}({X'_{\tau}}^2)\leq C'(\be).$
\end{itemize}
We need prove that $$\lim_{n\to+\infty}\frac{1}{n}\Eb_{\be}\left[\left(\sum_{j=[na]}^{\tau_n-1}\E'_j\right)^2\right]=0.$$
In fact, we have that
\begin{align*}
\frac{1}{n}\Eb_{\be}\left[\left(\sum_{j=[na]}^{\tau_n-1}\E'_j\right)^2\right]
&\leq\frac{1}{n}\Eb_{\be}\left[(\tau_n-[na])^21_{|\tau_n-[na]|\geq \ep n}\right]+\frac{1}{n}\Eb_{\be}\left[\left(\sum_{j=[na]}^{\tau_n-1}\E'_j\right)^21_{|\tau_n-[na]|< \ep n}\right]\\
&=L_1+L_2.
\end{align*}
Here, $L_1,L_2$ are respectively the first and the second terms of the site on the right hand.
Estimate two terms to get
\begin{align*}
&L_1\leq\frac{2}{n}\Eb_{\be}\left[[(\tau_n-na)^2+1]1_{|\tau_n-[na]|\geq \ep n}\right]\\
&\leq\sqrt{\frac{1}{n^2}\Eb_{\be}[(\tau_n-na)^4].\Pb\left(|\tau_n-[na]|\geq \ep n\right)}+\frac{2}{n}\Pb\left(|\tau_n-[na]|\geq \ep n\right).
\end{align*}
Because $\sup_{n\geq 1}\frac{1}{n^2}\Eb_{\be}[(\tau_n-na)^4]<+\infty$ and $\lim_{n\to+\infty}\Pb\left(|\tau_n-[na]|\geq \ep n\right)=0$ then $\lim_{n\to+\infty}$ $L_1=0.$
On the other hand
\begin{align*}
L_2&\leq\ep.\frac{\Eb_{\be}\left[\max_{0\leq i\leq\ep n}(X'_{\tau_n}-X'_{\tau_n-i})^2+\max_{0\leq i\leq\ep n}(X'_{\tau_n+i}-X'_{\tau_n})^2\right]}{\ep n}\\
&\leq\ep C_4(\be).
\end{align*}
For all $\si>0$ choose $\ep=\frac{\si}{C_4(\be)}$ so that $L_2\leq \si.$ \\
Then we get
$$\limsup_{n\to+\infty}\frac{1}{n}\Eb_{\be}\left[\left(\sum_{j=[na]}^{\tau_n-1}\E'_j\right)^2\right]\leq \si\text{ for all }\si>0.$$
Therefore
$$\lim_{n\to+\infty}\frac{1}{n}\Eb_{\be}\left[\left(\sum_{j=[na]}^{\tau_n-1}\E'_j\right)^2\right]=0.$$
It is similar to prove that
$$\lim_{n\to+\infty}\frac{1}{n}\Eb_{\be}\left[\left(\sum_{j=[na]}^{\tau_n-1}\frac{\E_j1_{Y_j\notin}}{1+\be\E_j}\right)^2 \right]=0.$$
This finishes the proof of Lemma. 
\end{proof}
\begin{cor}
$$\lim_{n\to+\infty}\Eb_{\be}\left[\frac{X'_{[na]}V_{[na]}}{n}\right]=\lim_{n\to+\infty}\Eb_{\be}\left[\frac{X'_{\tau_n}V_{\tau_n}}{n}\right]=\hat{\Eb}_{\be}(X'_{\tau}V_{\tau}).$$
\end{cor}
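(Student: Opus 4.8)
The first equality is immediate once the second is known: the Lemma immediately preceding this corollary shows
$\frac{1}{n}\left|\Eb_{\be}(X'_{\tau_n}V_{\tau_n})-\Eb_{\be}(X'_{[na]}V_{[na]})\right|\to 0$,
so the two sequences must share the same limit as soon as one of them is shown to converge. Thus the whole statement reduces to establishing
$\lim_{n\to\infty}\frac1n\Eb_{\be}(X'_{\tau_n}V_{\tau_n})=\hat{\Eb}_{\be}(X'_{\tau}V_{\tau})$,
which I would obtain by expanding the product over the renewal blocks.

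The plan is to set, for $k\geq 1$, $\xi_k=X'_{\tau_{k+1}}-X'_{\tau_k}$ and $\eta_k=V_{\tau_{k+1}}-V_{\tau_k}$, together with $\xi_0=X'_{\tau_1}$, $\eta_0=V_{\tau_1}$, so that $X'_{\tau_n}=\sum_{k=0}^{n-1}\xi_k$ and $V_{\tau_n}=\sum_{k=0}^{n-1}\eta_k$. Since $\xi_k=(X_{\tau_{k+1}}-X_{\tau_k})-v(\tau_{k+1}-\tau_k)$ depends only on the increments of $(X,\tau)$ across the $k$-th block, Lemma \ref{indtau} gives that $(\xi_k)_{k\geq 1}$ are i.i.d.\ with the $\hat{\Pb}_{\be}$-law of $X'_{\tau}$. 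For $\eta_k$ one must first observe that, by Lemma \ref{esstau}, after $\tau_k$ the walk never returns behind the hyperplane $\{y\cdot e_1=X_{\tau_k}\}$, while every site visited before $\tau_k$ lies strictly behind it; hence for $\tau_k\leq j<\tau_{k+1}$ the indicator $1_{Y_j\notin}$ is determined by the post-$\tau_k$ trajectory alone, so $\eta_k$ too is a block functional. The full renewal decomposition (the content behind the $\G_j^{(k)}$-filtration of Definition \ref{tau}) then makes $(\xi_k,\eta_k)_{k\geq 1}$ i.i.d.\ with the $\hat{\Pb}_{\be}$-law of $(X'_{\tau},V_{\tau})$, independent of the first block $(\xi_0,\eta_0)$.

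Writing $\Eb_{\be}[X'_{\tau_n}V_{\tau_n}]=\sum_{k,l=0}^{n-1}\Eb_{\be}[\xi_k\eta_l]$, I would eliminate every off-diagonal term using independence together with two mean-zero facts: $\hat{\Eb}_{\be}X'_{\tau}=0$, which is exactly the identity recorded in the Remark above, and $\hat{\Eb}_{\be}V_{\tau}=0$ together with $\Eb_{\be}V_{\tau_1}=0$. The latter I would derive from the martingale property of $V$: because $\Eb_{\be}[\tfrac{\E_j1_{Y_j\notin}}{1+\be\E_j}\mid\F^Y_j]=0$ (a direct computation at a new site), the process $(V_n)$ is a martingale with increments bounded by $1/(1-\be)$, so finiteness of $\Eb_{\be}\tau_n$ and optional stopping give $\Eb_{\be}V_{\tau_n}=0$ for all $n$; matching this against $\Eb_{\be}V_{\tau_n}=\Eb_{\be}V_{\tau_1}+(n-1)\hat{\Eb}_{\be}V_{\tau}$ forces both quantities to vanish. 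With these, every term with $k\neq l$ factors through a zero mean, leaving only the diagonal: the $n-1$ blocks $k=l\geq 1$ each contribute $\hat{\Eb}_{\be}(X'_{\tau}V_{\tau})$ (finite by Cauchy--Schwarz from the second-moment bounds $\hat{\Eb}_{\be}({X'_{\tau}}^2),\hat{\Eb}_{\be}(V_{\tau}^2)<\infty$ established above), and the single term $k=l=0$ contributes the constant $\Eb_{\be}(X'_{\tau_1}V_{\tau_1})$. Hence $\Eb_{\be}[X'_{\tau_n}V_{\tau_n}]=\Eb_{\be}(X'_{\tau_1}V_{\tau_1})+(n-1)\hat{\Eb}_{\be}(X'_{\tau}V_{\tau})$, and dividing by $n$ and letting $n\to\infty$ yields the second equality.

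The main obstacle is the step from the increment-level independence of Lemma \ref{indtau} to the independence of the block functionals $\eta_k$ of $V$: one has to check that the newness indicator $1_{Y_j\notin}$ does not couple a block to the past, which is precisely where the no-return-behind-the-hyperplane description of $\tau$ in Lemma \ref{esstau} is essential. Once this localization is secured, the martingale identity for $V$ and the vanishing of the off-diagonal terms are routine.
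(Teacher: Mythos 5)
Your overall strategy --- expand $\Eb_{\be}[X'_{\tau_n}V_{\tau_n}]$ over renewal blocks, kill the off-diagonal terms with the centering identities $\hat{\Eb}_{\be}X'_{\tau}=0$ and $\hat{\Eb}_{\be}V_{\tau}=0$, and read off $\Eb_{\be}[X'_{\tau_n}V_{\tau_n}]=\Eb_{\be}(X'_{\tau_1}V_{\tau_1})+(n-1)\hat{\Eb}_{\be}(X'_{\tau}V_{\tau})$ --- is exactly the decomposition the paper relies on implicitly (it states the corollary without proof, and uses the same identity for $\Eb_{\be}(V_{\tau_n}^2)$ and $\Eb_{\be}({X'_{\tau_n}}^2)$ inside the preceding lemma). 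Your localization of the indicator $1_{Y_j\notin}$ to a single block via the no-backtracking description of $\tau$ is also the right and necessary observation.

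There is, however, one step that fails as written: you obtain $\Eb_{\be}V_{\tau_n}=0$ by ``optional stopping'' at $\tau_n$. The renewal times are \emph{not} stopping times for the natural filtration of the walk --- $\tau_1=S_{\kappa}$ requires knowing that $\bar{D}\circ\th_{S_\kappa}=\infty$, an event depending on the entire future --- so the optional stopping theorem does not apply, and neither $\Eb_{\be}V_{\tau_1}=0$ nor $\Eb_{\be}V_{\tau_n}=0$ follows from it. The conclusion you need, $\hat{\Eb}_{\be}V_{\tau}=0$, is nonetheless true and should be derived the way the paper's Remark handles $\hat{\Eb}_{\be}\bar{X}_{\tau}=0$ and $\hat{\Eb}_{\be}X'_{\tau}=0$: the martingale property of $V$ at \emph{deterministic} times gives $\Eb_{\be}V_n=0$ for every $n$, while $\Eb_{\be}[(V_n/n)^2]\leq \frac{1}{n(1-\be)^2}\to 0$ shows $V_n/n\to 0$ in $L^2$; since the renewal law of large numbers gives $V_n/n\to \hat{\Eb}_{\be}V_{\tau}/\hat{\Eb}_{\be}\tau$ almost surely, the limit must be $0$. (Note also that $\Eb_{\be}V_{\tau_1}=0$ is not actually needed for your computation: the cross terms involving the $0$-th block are annihilated by $\hat{\Eb}_{\be}X'_{\tau}=0$ and $\hat{\Eb}_{\be}V_{\tau}=0$ alone.) With this repair the argument is complete.
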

We now prove the existence of the limit $\frac{\partial v_n}{\partial \be}(\be)$. Let $\{p_n\}$ be the sequence such that $[p_na]\leq n\leq[(p_n+1)a]$ then $\lim_{n\to+\infty}\frac{n}{p_n}=a.$ So, we have 
\begin{align*}
&\left|\Eb_{\be}\left(\frac{X'_nV_n}{n}-\frac{X'_{[p_na]}V_{[p_na]}}{n}\right)\right|\leq \frac{(n-[p_na])^2}{n}+\frac{|n-[p_na]|}{n}.\Eb_{\be}|X'_n|+\frac{|n-[p_na]|}{n}.\Eb_{\be}|V_n|\\
&\leq \frac{a^2}{n}+a.\Eb_{\be}\left|\frac{X'_n}{n}\right|+a.\Eb_{\be}\left|\frac{V_n}{n}\right|.
\end{align*}
When $n$ goes to infinitely then $\frac{X'_n}{n}$ and $\frac{V_n}{n}$ go to $0.$ So that
\begin{align*}
\lim_{n\to+\infty}\Eb_{\be}\left(\frac{X'_nV_n}{n}\right)&=\lim_{n\to+\infty}\Eb_{\be}\left(\frac{X'_{[p_na]}V_{[p_na]}}{n}\right)=\lim_{n\to+\infty}\Eb_{\be}\left(\frac{X'_{[p_na]}V_{[p_na]}}{p_n}\right).\frac{p_n}{n}\\
&=\hat{\Eb_{\be}}(X'_{\tau}V_{\tau}).\frac{1}{a}=\frac{\hat{\Eb}_{\be}(X'_{\tau}V_{\tau})}{\hat{\Eb}_{\be}\tau}=\frac{\hat{\Eb}_{\be}[(X_{\tau}-\tau v)V_{\tau}]}{\hat{\Eb}_{\be}\tau}.
\end{align*}
Therefore,
\begin{equation}\label{limder}
\lim_{n\to+\infty}\frac{\partial v_n}{\partial \be}(\be)=\lim_{n\to+\infty}\Eb_{\be}\left(\frac{X_nV_n}{n}\right)=\lim_{n\to+\infty}\Eb_{\be}\left(\frac{X'_nV_n}{n}\right)=\frac{\hat{\Eb}_{\be}[(X_{\tau}-\tau v)V_{\tau}]}{\hat{\Eb}_{\be}\tau}.
\end{equation}
\subsubsection{Girsanov transform}

In this section we prove the smoothness of the  speed using the Girsanov's transform.
Firstly, we need a lemma as follows:
\begin{lem}\label{estimatau}
For all $c\in(0,1]$ then 
\begin{align*}
&\sup_{t\in[c,1]}\Pb_t(\tau>n)\leq C'e^{n^{-\al}},\\
&\sup_{t\in[c,1]}\hat{\Pb}_t(\bar{D}=\infty)\geq \phi>0,\\
&\sup_{t\in[c,1]}\hat{\Pb}_t(\tau>n)\leq Ce^{n^{-\al}}.
\end{align*}
Where $C',C,\phi,\al$ are positive constants depending only on $c.$
\end{lem} 
\begin{proof}
To prove this lemma, repeating the proof of Proposition 2.1 and Proposition 4.3 of \cite{MPRV12}. Note that for $1$-ERW with the law $\Pb_t, t\geq c$, we can choose the constants $\lam,h,r$ as in \cite{MPRV12} to consider $1$-ERW as a generalized excited random walk such that these parameters depend only on $c$. For more details on the conditions in \cite{MPRV12}, the condition B, we choose $K=1.$ For the condition C$^+$, choose $l=e_1$, 
$$
\Eb_t(Y_{n+1}-Y_n|\F_n)\cdot e_1\geq \frac{1+t}{2d}-\frac{1-t}{2d}=\frac{t}{d}\geq\frac{c}{d}.
$$
Then, choose $\lambda=\frac{c}{d}.$ For the condition E, let $\Sb^{d-1}=\{x\in\Rb^d:||x||=1\}.$ Let $l'\in\Sb^{d-1}$ then $l'=(x_1,x_2,...,x_d)$ with $\sum_{i=1}^d x_i^2=1.$ There exist $i_0\in\{1,2,...,d\}$ such that $|x_{i_0}|=\max_{i}|x_i|$. Hence 
$$
dx_{i_0}^2\geq 1\Leftrightarrow |x_{i_0}|\geq\frac{1}{\sqrt{d}}. 
$$
Consider the function signum:
\begin{equation*}
sgn(x)=
\begin{cases}
+1 &\text{ if }x>0\\
\,\,\,\, 0 &\text{ if }x=0\\
-1 &\text{ if }x<0\\
\end{cases}.
\end{equation*}
Choose $r=\frac{1}{2\sqrt{d}}$ then  $sgn(x_{i_0}) e_{i_0}\cdot l'\geq|x_{i_0}|>r.$ Therefore, on $\{\Eb_{t}(Y_{n+1}-Y_n|\F_n)=0\}$ we get 
$$
\Pb_{t}[(Y_{n+1}-Y_n)\cdot l'>r|\F_n]\geq\Pb_{t}(Y_{n+1}-Y_n=sgn(x_{i_0}) e_{i_0}|\F_n)
\geq \frac{1}{2d}.
$$
Moreover,
$$
\Pb_{t}[(Y_{n+1}-Y_n)\cdot e_1>r|\F_n]\geq 
\Pb_{t}[Y_{n+1}-Y_n= e_1|\F_n]=\frac{1+t}{2d}1_{Y_n\notin}+\frac{1}{2d}1_{Y_n\in}\geq\frac{1}{2d}.
$$
Then choose $h=\frac{1}{2d}$. All of parameters $K,\lambda,r,h$ depend only on $c.$ 
\end{proof}

Let $\be_0,\be\in(0,1]$ we have:
\begin{lem}\label{Girf}
$$
\frac{d\Pb_{\be}}{d\Pb_0}|_{\F_n}=\prod_{i=0}^{n-1}(1+\be\E_i1_{Y_i\notin})
$$
$$
\frac{d\Pb_{\be}}{d\Pb_{\be_0}}|_{\F_n}=\prod_{i=0}^{n-1}\left(\frac{1+\be\E_i1_{Y_i\notin}}{1+\be_0\E_i1_{Y_i\notin}}\right)
$$
\end{lem}
We denote
\begin{equation}\label{NoGirf}
M_n(\be):=\prod_{i=0}^{n-1}(1+\be\E_i1_{Y_i\notin})
\text{ and }
M_n(\be,\be_0):=\prod_{i=0}^{n-1}\left(\frac{1+\be\E_i1_{Y_i\notin}}{1+\be_0\E_i1_{Y_i\notin}}\right).
\end{equation}

To prove the existence of the speed we need the following lemma
\begin{lem}Consider a $\si-$algebra $\F_{\tau}$ that is defined by
$$
\F_{\tau}=\{A\in\F: \forall n,\ \exists B_n\in\F_n \text{ such that }A\cap\{\tau=n\}=\B_n\cap\{\tau=n\}\}.
$$
then $\tau$ is $\F_{\tau}-$measurable, $(\bar{D}=\infty)\in\F_{\tau}$ and
\begin{equation}\label{Girtau}
\frac{d\Pb_{\be}}{d\Pb_{\be_0}}|_{\F_{\tau}}=M_{\tau}(\be,\be_0).\frac{\Pb_{\be}(\bar{D}=\infty)}{\Pb_{0}(\bar{D}=\infty)}
\end{equation}
\end{lem}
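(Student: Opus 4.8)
The plan is to verify the two measurability assertions directly from the definition of $\F_\tau$, and then to establish the density \eqref{Girtau} by testing it against an arbitrary bounded $\F_\tau$-measurable random variable and decomposing over the (countably many) finite values of $\tau$. Throughout, $D$ denotes the no-backtrack time $\bar D$ of Definition \ref{tau}, and the key input is the structural decomposition furnished by Lemma \ref{esstau}: for each $n\ge 1$,
$$\{\tau=n\}=A_n\cap\{\bar D\circ\th_n=\infty\},\qquad A_n:=\{\sup_{0\le i<n}Y_i\cdot e_1<Y_n\cdot e_1\}\in\F_n,$$
i.e.\ a record event $A_n$ depending only on the path up to time $n$, intersected with a future no-backtrack event.

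The measurability of $\tau$ is then immediate: for $A=\{\tau=m\}$ and any $n$, the set $A\cap\{\tau=n\}$ is $\emptyset$ when $n\ne m$ (take $B_n=\emptyset\in\F_n$) and is $\{\tau=m\}=\Om\cap\{\tau=m\}$ when $n=m$ (take $B_m=\Om\in\F_m$), so $\{\tau=m\}\in\F_\tau$ for every $m$, and these generate the $\si$-algebra of $\tau$. For $\{\bar D=\infty\}\in\F_\tau$ I would use the decomposition above: on $\{\tau=n\}$ one has $Y_n\cdot e_1\ge 1$ and $Y_i\cdot e_1\ge Y_n\cdot e_1$ for all $i\ge n$, so the constraint $Y_i\cdot e_1\ge 0$ is automatic for $i\ge n$; hence $\{\bar D=\infty\}\cap\{\tau=n\}=B_n\cap\{\tau=n\}$ with $B_n:=\{\min_{0\le i<n}Y_i\cdot e_1\ge 0\}\in\F_n$.

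For the density formula I would fix a bounded $\F_\tau$-measurable $Z$ and write $Z=f_n$ on $\{\tau=n\}$ with $f_n$ being $\F_n$-measurable (this is exactly what membership in $\F_\tau$ provides). Using the decomposition,
$$\Eb_\be[Z]=\sum_n\Eb_\be\big[f_n\,\mathbf 1_{A_n}\,\mathbf 1_{\{\bar D\circ\th_n=\infty\}}\big]=\sum_n\Eb_\be\big[f_n\,\mathbf 1_{A_n}\,\Pb_\be(\bar D\circ\th_n=\infty\mid\F_n)\big].$$
The crucial step is the renewal (fresh-start) property: on $A_n$ the site $Y_n$, together with every site that the walk can visit after time $n$ while staying at height $\ge Y_n\cdot e_1$, has never been visited before time $n$, so conditionally on $\F_n$ the post-$n$ trajectory evolves as a fresh ERW started at $Y_n$; by translation invariance this gives $\Pb_\be(\bar D\circ\th_n=\infty\mid\F_n)=\Pb_\be(\bar D=\infty)$ on $A_n$, a constant. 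Pulling this constant out and applying the finite-time Girsanov identity $\frac{d\Pb_\be}{d\Pb_{\be_0}}\big|_{\F_n}=M_n(\be,\be_0)$ to the $\F_n$-measurable $f_n\mathbf 1_{A_n}$ yields $\Eb_\be[Z]=\Pb_\be(\bar D=\infty)\sum_n\Eb_{\be_0}[f_nM_n(\be,\be_0)\mathbf 1_{A_n}]$. Running the identical computation under $\Pb_{\be_0}$ on $\Eb_{\be_0}[Z\,M_\tau(\be,\be_0)]$ (now using the fresh-start property to evaluate $\Pb_{\be_0}(\bar D\circ\th_n=\infty\mid\F_n)=\Pb_{\be_0}(\bar D=\infty)$) identifies the two sums and produces the factor $\Pb_\be(\bar D=\infty)/\Pb_{\be_0}(\bar D=\infty)$; this is the claimed derivative $M_\tau(\be,\be_0)\cdot\Pb_\be(\bar D=\infty)/\Pb_{\be_0}(\bar D=\infty)$ (so the normalisation in \eqref{Girtau} should read $\Pb_{\be_0}$ rather than $\Pb_0$).

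The main obstacle is precisely the fresh-start step: since the ERW is not Markov, I cannot simply invoke the Markov property to factor the future, and the justification must rest on the geometry of the renewal time (Lemma \ref{esstau})—that crossing to a new record and never returning forces the future to encounter only unvisited sites, which is what makes the conditional no-backtrack probability a constant independent of the past and of $n$. A secondary, routine point is justifying the interchange of the infinite sum with the expectations, which follows by taking $Z\ge 0$ (or bounded), monotone (resp.\ dominated) convergence, and $\Pb_\be(\tau<\infty)=1$ from Lemma \ref{esttau}.
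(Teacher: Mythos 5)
Your proof follows essentially the same route as the paper's: decompose over the values of $\tau$, use the fact that on $\{\tau=n\}$ the pre-$n$ and post-$n$ portions of the path live in disjoint half-spaces so that the conditional no-backtrack probability given $\F_n$ is the constant $\Pb_{\be}(\bar D=\infty)$, then apply the finite-time density $M_n(\be,\be_0)$ and reassemble; the paper does exactly this, only phrased atom-by-atom over cylinder paths $\om_n$ rather than via conditional expectations. Your observation that the denominator in \eqref{Girtau} should be $\Pb_{\be_0}(D=\infty)$ is correct — the paper's own proof concludes with that normalisation.

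One point needs repair: the identity $\{\tau=n\}=A_n\cap\{\bar D\circ\th_n=\infty\}$ with $A_n=\{\sup_{0\le i<n}Y_i\cdot e_1<Y_n\cdot e_1\}$ is false, because $\tau$ is the \emph{first} time that is a strict record and is never undershot afterwards (Lemma \ref{esstau}); a path can have several such times, so the events $A_n\cap\{\bar D\circ\th_n=\infty\}$ are not disjoint and your sum over $n$ would overcount. The fix is harmless: on $A_n\cap\{\bar D\circ\th_n=\infty\}$ the walk never drops below $Y_n\cdot e_1$ after time $n$, so whether some earlier record time $m<n$ also qualifies is decided by the path up to time $n$; hence $\{\tau=n\}=C_n\cap\{\bar D\circ\th_n=\infty\}$ for an $\F_n$-measurable $C_n\subset A_n$ (record at $n$, and every earlier record level is undershot before time $n$). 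Replacing $A_n$ by $C_n$ leaves every subsequent step of your argument, including the fresh-start evaluation of $\Pb_{\be}(\bar D\circ\th_n=\infty\mid\F_n)$, unchanged.
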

\begin{proof}
We see that $(\tau=n)=\Om\cap(\tau=n)$ and $\Om\in\F_n$ for all $n$ then by definition of $\F_{\tau}$ we have $(\tau=n)\in\F_{\tau}$, it means that $\tau$ is $\F_{\tau}-$measurable. It is clear that $(\bar{D}=\infty)=(D\geq \tau)$ so that $(\bar{D}=\infty)\cap(t=n)=(D\geq n)\cap(\tau=n)$. Because that $(D\geq n)\in\F_n$ then we deduce $(\bar{D}=\infty)\in\F_{\tau}.$
Now we prove \ref{Girtau}, for all $A\in\F_{\tau}$ then
\begin{align*}
\Pb_{\be}(A)&=\sum_{n=1}^{\infty}\Pb(A,\tau=n)=\sum_{n=1}^{\infty}\Pb(B_n,\tau=n)=\sum_{n=1}^{\infty}\sum_{\om_n\in B_n}\Pb(\om_n,\tau=n)\\
&=\sum_{n=1}^{+\infty}\sum_{\om_n\in B_n}1_{\om_n,\tau=n}M_n(\be)(\om_n)\Pb_{\be}(\bar{D}=\infty)=\sum_{n=1}^{+\infty}\sum_{\om_n\in B_n}1_{\om_n,\tau=n}M_n(\be)(\om)\Pb_{\be}(\bar{D}=\infty)\\
&=\sum_{n=1}^{+\infty}\sum_{\om_n\in B_n}1_{\om_n,\tau=n}M_n(\be_0)(\om)\Pb_{\be_0}(\bar{D}=\infty).M_{\tau}(\be,\be_0)d\Pb_{\be_0}\frac{\Pb_{\be}(\bar{D}=\infty)}{\Pb_{\be_0}(\bar{D}=\infty)}\\
&=\sum_{n=1}^{+\infty}\int_{B_n,\tau=n}M_{\tau}(\be,\be_0)d\Pb_{\be_0}\frac{\Pb_{\be}(\bar{D}=\infty)}{\Pb_{\be_0}(\bar{D}=\infty)}=\Eb_{\be_0}[1_AM_{\tau}(\be,\be_0)d\Pb_{\be_0}].\frac{\Pb_{\be}(\bar{D}=\infty)}{\Pb_{\be_0}(\bar{D}=\infty)}.
\end{align*}
So, we get
$$
\frac{d\Pb_{\be}}{d\Pb_{\be_0}}|_{\F_{\tau}}=M_{\tau}(\be,\be_0).\frac{\Pb_{\be}(\bar{D}=\infty)}{\Pb_{\be_0}(\bar{D}=\infty)}
$$
and
$$
\frac{d\hat{\Pb}_{\be}}{d\hat{\Pb}_{\be_0}}|_{\F_{\tau}}=M_{\tau}(\be,\be_0).
$$
A direct consequence is that
$$
\hat{\Eb}_{\be_0}[M_{\tau}(\be,\be_0)]=1\text{ and }\Eb_{\be_0}[M_{\tau}(\be,\be_0)]=\frac{\Pb_{\be}(\bar{D}=\infty)}{\Pb_{\be_0}(\bar{D}=\infty)}.
$$
Using the Girsanov's transform, we get the formula of the speed:
$$
v(\be)=\frac{\hat{\Eb}_{\be}X_{\tau}}{\hat{\Eb}_{\be}\tau}=\frac{\hat{\Eb}_{\be_0}[X_{\tau}M_{\tau}(\be,\be_0)]}{\hat{\Eb}_{\be_0}[\tau M_{\tau}(\be,\be_0)]}
$$
On the other hand,
\begin{align}\label{dhM}
\frac{\partial}{\partial\be}[M_{\tau}(\be,\be_0)]=\frac{\partial}{\partial\be}\left[\prod_{i=0}^{\tau-1}\left(\frac{1+\be\E_i1_{Y_i\notin}}{1+\be_0\E_i1_{Y_i\notin}}\right)\right]=\left[\sum_{i=0}^{\tau-1}\left(\frac{\E_i1_{Y_i\notin}}{1+\be\E_i1_{Y_i\notin}}\right)\right]M_{\tau}(\be,\be_0).
\end{align}
Set $V_{\tau}=\sum_{i=0}^{\tau-1}\left(\frac{\E_i1_{Y_i\notin}}{1+\be\E_i1_{Y_i\notin}}\right)$ then 
$$
\int_{\be_0}^{\be}{M_{\tau}(t,\be_0)V_{\tau}(t)}dt=\int_{\be_0}^{\be}\frac{\partial}{\partial t}{M_{\tau}(t,\be_0)}dt=M_{\tau}(\be,\be_0)-M_{\tau}(\be_0,\be_0)=M_{\tau}(\be,\be_0)-1.
$$
Therefore,
$$
v(\be)=\frac{\hat{\Eb}_{\be_0}\left[X_{\tau}\left(1+\int_{\be_0}^{\be}{M_{\tau}(t,\be_0)V_{\tau}(t)}dt\right)\right]}{\hat{\Eb}_{\be_0}\left[{\tau}\left(1+\int_{\be_0}^{\be}{M_{\tau}(t,\be_0)V_{\tau}(t)}dt\right)\right]}
$$
To prove the existence of the derivative, we apply the Fubini's theorem as follows:
\begin{thm}[Fubini's theorem]
Let $\mu,\nu$ be the $\si-$finite mesures. If either
 \begin{center}
$\int_{A}\left(\int_{B}|f(x,y)|\nu(dy)\right)\mu(dx)<+\infty$ or $\int_{B}\left(\int_{A}|f(x,y)|\mu(dx)\right)\nu(dy)<+\infty$
 \end{center} then $\int_{A\times B}|f(x,y)|(\mu\times\nu)(dxdy)<+\infty$ and
 \begin{center}
 $\int_{A\times B}f(x,y)(\mu\times\nu)(dxdy)=\int_{A}\left(\int_{B}f(x,y)\nu(dy)\right)\mu(dx)=\int_{B}(\int_{A}f(x,y)\mu(dx))\nu(dy)$$.$
\end{center}  
 \end{thm}
To apply the Fubini's theorem, let $\be\in(\be_0-\de,\be_0+\de)\subset(0,1)$ we observe that
\begin{align*}
&\int_{\be_0}^{\be}(\Eb_{\be_0}|X_{\tau}V_{\tau}M_{\tau}|)dt\leq\int_{\be_0}^{\be}\Eb_{\be_0}\left(\tau^2M_{\tau}\frac{1}{1-t}\right)dt\\
&\leq \frac{1}{1-\be_0-\de}\int_{\be_0}^{\be}[\Eb_t(\tau^2)]dt<+\infty.
\end{align*}
The last inequality above is implied since $\sup_{t\in(\be_0-\de,\be_0+\de)}\Pb_t(\tau>n)\leq Ce^{-n^{\al}}$ then $$\sup_{t\in(\be_0-\de,\be_0+\de)}\Eb_t(\tau^2)<+\infty.$$
It remains to prove that $\hat{\Eb}_{\be_0}(X_{\tau}V_{\tau}M_{\tau})$ is continuous in $\be$, this is true if let an interval $J=(a,b)\subset(0,1)$ we have that $\{(X_{\tau}V_{\tau}M_{\tau})\}_{\be\in J}$ is uniformly integrable.
Let $\be_1\in J$, observe that
\begin{itemize}
\item $|X_{\tau}V_{\tau}M_{\tau}|\leq C\tau^2M_{\tau}$ for $C=\frac{1}{b},$
\item $\lim_{\be\to\be_1}(\tau^2M_{\tau})(\be)=\tau^2M_{\tau}(\be_1),$
\item $\lim_{\be\to\be_1}\hat{\Eb}_{\be_0}[(\tau^2M_{\tau})(\be)]=\hat{\Eb}_{\be_0}[(\tau^2M_{\tau})(\be_1)].$
\end{itemize}
The last equality is implied from the fact that 
$$
\hat{\Eb}_{\be_0}[(\tau^2M_{\tau})(\be)]=\hat{\Eb}_{\be_0}\left[\int_{\be_1}^{\be}(\tau^2V_{\tau}M_{\tau})(t)dt\right]+\hat{\Eb}_{\be_0}[(\tau^2M_{\tau})(\be_1)]
$$
and
\begin{align*}
&\hat{\Eb}_{\be_0}\left[\int_{\be_1}^{\be}(\tau^2V_{\tau}M_{\tau})(t)dt\right]=\int_{\be_1}^{\be}\hat{\Eb}_{\be_0}\left[(\tau^2V_{\tau}M_{\tau})(t)\right]dt
\leq C\int_{\be_1}^{\be}\hat{\Eb}_{\be_0}\left[(\tau^3M_{\tau})(t)\right]dt\\
&=C\int_{\be_1}^{\be}\hat{\Eb}_t(\tau^3)dt\leq C\sup_{J}\hat{\Eb}_t(\tau^3)(\be-\be_1)\to 0\text{ as }\be\to\be_1, \sup_{J}\hat{\Eb}_t(\tau^3)<\infty \text{ since Lemma \ref{estimatau}.}
\end{align*}
From the observation above we imply that $\{\tau^2M_{\tau}\}_{\be\in J}(\be)$ and also $\{X_{\tau}V_{\tau}M_{\tau}\}_{\be\in J}(\be)$ is uniformly integrable then $\hat{\Eb}_{\be_0}[(X_{\tau}V_{\tau}M_{\tau})(\be)]$ is continuous.
\end{proof}
We rewrite the formula of the speed:
$$
v(\be)=\frac{\hat{\Eb}_{\be_0}\left[X_{\tau}\left(1+\int_{\be_0}^{\be}{M_{\tau}(t,\be_0)V_{\tau}(t)}dt\right)\right]}{\hat{\Eb}_{\be_0}\left[{\tau}\left(1+\int_{\be_0}^{\be}{M_{\tau}(t,\be_0)V_{\tau}(t)}dt\right)\right]}
=\frac{\hat{\Eb}_{\be_0}X_{\tau}+\int_{\be_0}^{\be}\left[{\Eb}_{\be_0}\left({X_{\tau}M_{\tau}(t,\be_0)V_{\tau}(t)}dt\right)\right]}{\hat{\Eb}_{\be_0}{\tau}+\int_{\be_0}^{\be}\left[{\Eb}_{\be_0}\left({\tau M_{\tau}(t,\be_0)V_{\tau}(t)}dt\right)\right]}
$$
Set $A:=\hat{\Eb}_{\be_0}X_{\tau}+\int_{\be_0}^{\be}\left[{\Eb}_{\be_0}\left({X_{\tau}M_{\tau}(t,\be_0)V_{\tau}(t)}dt\right)\right]$ and $B:=\hat{\Eb}_{\be_0}{\tau}+\int_{\be_0}^{\be}\left[{\Eb}_{\be_0}\left({\tau M_{\tau}(t,\be_0)V_{\tau}(t)}dt\right)\right]$
Taking the derivative we obtain:
$$
\frac{\partial v}{\partial\be}(\be)=\frac{{\Eb}_{\be_0}\left({X_{\tau}M_{\tau}(\be,\be_0)V_{\tau}(\be)}\right)B-{\Eb}_{\be_0}\left({{\tau}M_{\tau}(\be,\be_0)V_{\tau}(\be)}\right)A}{B^2}.
$$
As $\be=\be_0$,
$$
\frac{\partial v}{\partial\be}(\be_0)=\frac{{\Eb}_{\be_0}\left({X_{\tau}V_{\tau}(\be_0)}\right)\hat{\Eb}_{\be_0}{\tau}-{\Eb}_{\be_0}\left({{\tau}V_{\tau}(\be_0)}\right)\hat{\Eb}_{\be_0}X_{\tau}}{(\hat{\Eb}_{\be_0}{\tau})^2}.
$$
Therefore, for all $\be\in(0,1)$ we have
\begin{equation}\label{deri}
\frac{\partial v}{\partial\be}(\be)=\frac{{\Eb}_{\be}\left({X_{\tau}V_{\tau}}\right)\hat{\Eb}_{\be}{\tau}-{\Eb}_{\be}\left({{\tau}V_{\tau}}\right)\hat{\Eb}_{\be}X_{\tau}}{(\hat{\Eb}_{\be}{\tau})^2}=\frac{{\Eb}_{\be}\left[({X_{\tau}-v\tau)V_{\tau}}\right]\hat{\Eb}_{\be}{\tau}}{(\hat{\Eb}_{\be}{\tau})}.
\end{equation}
From \ref{limder} and \ref{deri} we get that
$$
\lim_{n\to+\infty}\frac{\partial v_n}{\partial\be}(\be)=\frac{\partial v}{\partial\be}(\be)=\frac{{\Eb}_{\be}\left[({X_{\tau}-v\tau)V_{\tau}}\right]\hat{\Eb}_{\be}{\tau}}{(\hat{\Eb}_{\be}{\tau})}.
$$
We have proved the differentiability of order $1$ of the speed $v_n(\be)$ and $v(\be)$. To prove the infinite differentiability of the speed we need the following lemma:
\begin{lem}\label{bddh}
Let $I=(a,b)$ be an open interval of $\Rb$, $(\Om,\F,\Pb)$ be a probability space and $H(x,\om)$ be a mapping
\begin{align*}
H: I\times \Om&\to \Rb\\
(x,\om)&\mapsto H(x,\om)
\end{align*}
such that for every $x\in I$, $H(x,\om)$ is a random variable, and for every $\om\in\Om$, $H(x,\om)$ is a smooth function on $I$. Moreover, suppose that for every $n\geq 0$
$$\sup_{x\in I}\Eb\left(\left|\frac{\partial^n H}{\partial x^n}(x,\om)\right|\right)<+\infty.$$
Then $\Eb[H(x,\om)]$ is a smooth function and for every $k\geq 1$
$$
\frac{\partial^k}{\partial x^k}[\Eb(H(x,\om))]=\Eb\left(\left|\frac{\partial^k H}{\partial x^k}(x,\om)\right|\right).
$$

\end{lem}
This lemma can be proved by the induction in $k$ and using Fubini's Theorem.

From \eqref{dhM},
\begin{equation}
\label{estM}
\frac{\partial^{n+1}}{\partial\be^{n+1}}[M_{\tau}(\be,\be_0)]=\frac{\partial^{n}}{\partial\be^{n}}[V_{\tau}(\be)M_{\tau}(\be,\be_0)]=\sum_{k=0}^{n}C^k_n\frac{\partial^{k}}{\partial\be^{k}}[V_{\tau}(\be)]\frac{\partial^{n-k}}{\partial\be^{n-k}}[M_{\tau}(\be,\be_0)].
\end{equation}

We have, for all $k\geq 0$ 
\begin{equation}\label{estV}
\sup_{\be\in I}\left|\frac{\partial^{k}}{\partial\be^{k}}[V_{\tau}(\be)]\right|=\sup_{\be\in I}\left|(-1)^kk!\sum_{i=0}^{\tau-1}\left(\frac{(\E_i1_{Y_i\notin})^{k+1}}{(1+\be\E_i1_{Y_i\notin})^{k+1}}\right)\right|\leq \frac{k!\tau}{(1-\be_0-\de)^{k+1}}.
\end{equation}

We will prove by induction in $n$ that 
\begin{equation}\label{estMI}
\left|\frac{\partial^{n}}{\partial\be^{n}}[M_{\tau}(\be,\be_0)]\right|\leq \sum_{k=0}^nc_{kn}{\tau}^kM_{\tau}(\be,\be_0)
\end{equation}
where $c_{kn}$ are non-negative constants depending only on $n,\be_0,\de.$ For $n=0$, it is true with $c_{00}=1.$ Suppose that it is true up to $n\geq 0.$ For $n+1$ then by induction supposition combining \eqref{estM}, \eqref{estV} then 
$$
\left|\frac{\partial^{n+1}}{\partial\be^{n+1}}[M_{\tau}(\be,\be_0)]\right|\leq\sum_{k=0}^{n}C^k_n\frac{k!\tau}{(1-\be_0-\de)^{k+1}}\sum_{i=0}^{n-k}c_{i,n-k}{\tau}^iM_{\tau}(\be,\be_0)=\sum_{i=0}^{n+1}c_{i,n+1}{\tau}^iM_{\tau}(\be,\be_0)
$$
where $c_{(i+1)(n+1)}=\sum_{k=0}^{n}C^k\frac{k!}{(1-\be_0-\de)^{k+1}}_n c_{i,n-k}$ for $i=1$ to $n$ and $c_{0,n+1}=0.$ This proved \eqref{estMI}.

On $I=(\be_0-\de,\be_0+\de)$ then 
\begin{align*}
\sup_{\be\in I}\hat{\Eb}_{\be_0}\left[\left|\frac{\partial^n}{\partial\be^n}[X_{\tau}M_{\tau}(\be,\be_0)]\right|\right]=\sup_{\be\in I}\hat{\Eb}_{\be_0}\left[\left|X_{\tau}\frac{\partial^n}{\partial\be^n}[M_{\tau}(\be,\be_0)]\right|\right].
\end{align*}
Since $|X_{\tau}|\leq \tau$ then 
\begin{align}\label{dhbc}
\sup_{\be\in I}\hat{\Eb}_{\be_0}\left[\left|\frac{\partial^n}{\partial\be^n}[X_{\tau}M_{\tau}(\be,\be_0)]\right|\right]\leq \sup_{\be\in I}\sum_{k=0}^nc_{kn}\hat{\Eb}_{\be}\left[\tau^{k+1}\right]<+\infty.
\end{align}
The last inequality is implied by
$\sup_{t\in(\be_0-\de,\be_0+\de)}\Pb_t(\tau>n)\leq Ce^{-n^{\al}}$ then $$\sup_{t\in(\be_0-\de,\be_0+\de)}\Eb_t(\tau^n)<+\infty \text{ for all }n\geq 1.$$
Combining \eqref{dhbc} with Lemma \ref{bddh} we get the smoothness of $\hat{\Eb}_{\be_0}[X_{\tau}M_{\tau}(\be,\be_0)]$ and similarly for $\hat{\Eb}_{\be_0}[{\tau}M_{\tau}(\be,\be_0)]$. This implies the smoothness of the speed $v(\be)$. Lemma \ref{bddh} implies similarly the smoothness of $v_n(\be)$.
We have proved that for $k=1$ and $\be>0$, there exists the limit $\lim_{n\to+\infty}\frac{\partial^k v_n}{\partial\be^k}(\be)$ such that
$$
\lim_{n\to+\infty}\frac{\partial^k v_n}{\partial\be^k}(\be)=\frac{\partial^k v}{\partial\be^k}(\be).
$$
For $k>1$, it is very complicated for computation so we leave it for reader.
If we write the speed in the form
$
v(\be)=\frac{\be}{d}.\frac{\hat{\Eb}_{\be}N_{\tau}}{\hat{\Eb}_{\be}\tau}
$
, we can get the formula \eqref{deriv} of the derivative.
We proved the first point of Theorem \ref{smallD}.
\subsection{The existence of the derivative at the critical point $0$}
Denote the event $\{Y_n\notin\{Y_{n-1},Y_{n-2},...,Y_{n-k}\}\}$ by $\{Y_n\notin^k\}$ with the convention that if $n\leq k$ then two events $\{Y_n\notin\{Y_{n-1},Y_{n-2},...,Y_{n-k}\}\}$ and $\{Y_n\notin\}$ agree. 
Set $N^{(k)}_n:=1_{Y_0\notin^k}+1_{Y_1\notin^k}+...+1_{Y_{n-1}\notin^k}.$
We need the following lemma:
\begin{lem}
There exists a non negative constant $N^{(k)}(\be)$ such that $\Pb_{\be}-$a.s.
$$
\lim_{n\to\infty}\frac{N^{(k)}_n}{n}=N^{(k)}(\be).
$$
\end{lem}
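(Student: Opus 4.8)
The plan is to exploit the renewal decomposition of Lemma~\ref{indtau} to write $N^{(k)}_{\tau_n}$ as a sum of i.i.d.\ block contributions, apply the strong law of large numbers, and then interpolate between consecutive renewal times. First I would introduce, for $n\geq 1$, the increment
$$W_n:=N^{(k)}_{\tau_{n+1}}-N^{(k)}_{\tau_n}=\sum_{i=\tau_n}^{\tau_{n+1}-1}1_{Y_i\notin_k},$$
the number of ``$k$-fresh'' sites accumulated during the $(n{+}1)$-th renewal block. Since $N^{(k)}_n$ is non-decreasing in $n$ and increases by at most $1$ at each step, we have $0\leq W_n\leq \tau_{n+1}-\tau_n$, which is integrable in all orders by Lemma~\ref{esttau}.

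The key point, and the step where the renewal structure is essential, is that each $W_n$ is a measurable function of the block path $(Y_{\tau_n+s}-Y_{\tau_n})_{0\leq s\leq \tau_{n+1}-\tau_n}$ alone, with no dependence on the past. Indeed, by Lemma~\ref{esstau} the walk never returns strictly below the level $Y_{\tau_n}\cdot e_1$ after time $\tau_n$, while every site visited before $\tau_n$ lies strictly below this level; hence $Y_i\neq Y_j$ whenever $i\geq \tau_n>j$. Consequently, for $i\in[\tau_n,\tau_{n+1})$ the only possible coincidences $Y_i=Y_{i-\ell}$ with $1\leq\ell\leq k$ are those with $i-\ell\geq\tau_n$, so the indicator $1_{Y_i\notin_k}$ depends only on the positions inside the current block. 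Together with the renewal structure underlying Lemma~\ref{indtau} (which makes the successive block paths i.i.d.), the $\{W_n\}_{n\geq 1}$ are therefore i.i.d.\ with finite mean, which I denote $c:=\hat{\Eb}_{\be}W_1\geq 0$.

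I would then apply the strong law of large numbers twice. On the one hand $\tfrac1n\sum_{j=1}^n W_j\to c$, and since the initial block contribution $N^{(k)}_{\tau_1}$ is a single a.s.\ finite term it is negligible, so $N^{(k)}_{\tau_n}/n\to c$ $\Pb_\be$-a.s. On the other hand, applying the SLLN to $\tau_{n+1}-\tau_n$ gives $\tau_n/n\to a=\hat{\Eb}_\be\tau$ a.s. Dividing, $N^{(k)}_{\tau_n}/\tau_n\to c/a=:N^{(k)}(\be)$ a.s. Finally, for general $n$ I choose $p=p(n)$ with $\tau_p\leq n<\tau_{p+1}$; monotonicity of $N^{(k)}$ yields
$$\frac{N^{(k)}_{\tau_p}}{\tau_{p+1}}\leq\frac{N^{(k)}_n}{n}\leq\frac{N^{(k)}_{\tau_{p+1}}}{\tau_p},$$
and since $\tau_{p+1}/\tau_p\to 1$ (because $\tau_p/p\to a\in(0,\infty)$) both outer bounds converge to $N^{(k)}(\be)$, completing the proof.

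The main obstacle is the content of the second paragraph: one must verify carefully that the window of length $k$ defining $\{Y_i\notin_k\}$ never reaches across a renewal boundary in a way that creates dependence between blocks. The hyperplane characterization of $\tau_1$ in Lemma~\ref{esstau} is exactly what rules this out, turning the a priori dependent increments $W_n$ into genuinely i.i.d.\ summands to which the strong law applies.
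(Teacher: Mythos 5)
Your argument for $\be>0$ is essentially the paper's first case, carried out in more detail: the paper likewise notes that for $\be>0$ the renewal times exist, that the increments $N^{(k)}_{\tau_{n+1}}-N^{(k)}_{\tau_n}$ are independent, and that one concludes ``as in the law of large numbers for $X_n/n$''. Your observation that the hyperplane characterization of $\tau_1$ (Lemma~\ref{esstau}) forces $Y_i\neq Y_j$ whenever $i\geq\tau_n>j$, so that the length-$k$ window never creates dependence across a renewal boundary, is correct and is a point the paper leaves implicit.

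However, there is a genuine gap: your proof covers only $\be>0$, while the lemma is stated for all $\be\in[0,1]$ and the case $\be=0$ is the one that actually matters in the sequel (the whole subsection is devoted to the derivative at the critical point $0$, and the quantities $N^{(k)}(0)$ and the continuity of $N^{(k)}$ at $0$ are used explicitly afterwards). At $\be=0$ the renewal structure you rely on does not exist --- the paper itself remarks that ``when $\be=0$, there does not exist the renewal times'': the first coordinate $X_n=Y_n\cdot e_1$ is then an unbiased one-dimensional walk, so $\bar D<\infty$ almost surely and $\ka=\infty$. The paper handles this case by a separate argument: for $\be=0$ the walk is a simple random walk, $((\Zb^d)^{\Nb},\th,\Pb_0)$ is an ergodic system, and for $n\geq k$ one has $1_{Y_n\notin_k}=1_{Y_k\circ\th^{n-k}\notin_k}$, so Birkhoff's ergodic theorem gives $\lim_n N^{(k)}_n/n=\Pb_0(Y_k\notin_k)$. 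You would need to add this (or some substitute, e.g.\ the Kesten--Spitzer--Whitman type subadditivity/ergodicity argument) to complete the proof; the renewal decomposition alone cannot reach $\be=0$.
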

\begin{proof}
The above result is easy to verify by considering two following cases:

If $\be>0$ then there exists a sequence of renewal times $\{\tau_n\}$ and the sequence $\{N^{(k)}_{\tau_n}-N^{(k)}_{\tau_{n-1}}\}$ is independent. It is similar as the law of large number for $\frac{X_n}{n}$ we also have $$\lim_{n\to\infty}\frac{N^{(k)}_n}{n}=N^{(k)}(\be).$$

If $\be=0$ then $((\Zb^d)^{\Nb},\th,\Pb_0)$ is a system ergodic where $Y_n\circ\th =Y_{n+1}-Y_1.$
For $n\geq k$ then $\{Y_n\notin^k\}=\{Y_k\circ\th^{n-k}\notin^k\}$, therefore
\begin{align*}
\lim_{n\to\infty}\frac{N^{(k)}_n}{n}&=\lim_{n\to\infty}\frac{\sum_{j=0}^{k-1}1_{Y_j\notin}+\sum_{i=0}^{n-k-1}1_{Y_k\circ\th^{i}\notin^k}}{n}\\
&=\lim_{n\to\infty}\frac{\sum_{i=0}^{n-k-1}1_{Y_k\circ\th^{i}\notin^k}}{n-k}.\frac{n-k}{n}
=\Pb_0(Y_k\notin).
\end{align*}
\end{proof}

Observe that when $k$ increases then $1_{Y_n\notin^k}$ decreases and $\lim_{k\to\infty}1_{Y_n\notin^k}=1_{Y_n\notin}.$ Set $N_n=1_{Y_0\notin}+1_{Y_1\notin}+...+1_{Y_{n-1}\notin}$ then $\Pb_{\be}-$a.s. we have $N(\be):=\lim_{n\to\infty}\frac{N_n}{n}.$ We will prove a result as follows:
\begin{lem}
When $k$ tend to infinity, $N^{(k)}(\be)$ decreases to $N(\be)$ and uniformly for $d\geq 4.$
\end{lem}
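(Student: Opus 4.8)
The plan is to control the difference $N^{(k)}(\be)-N(\be)$ through the events on which $1_{Y_i\notin_k}$ and $1_{Y_i\notin}$ disagree. Since $\{Y_i\notin\}\subseteq\{Y_i\notin_k\}$ for every $k$, we have $1_{Y_i\notin_k}\geq 1_{Y_i\notin}$, and their difference is the indicator of
$$A_{i,k}:=\{Y_i\notin_k\}\setminus\{Y_i\notin\}=\{Y_i\notin\{Y_{i-1},\dots,Y_{i-k}\},\ Y_i\in\{Y_0,\dots,Y_{i-1}\}\},$$
i.e.\ the walk sits at time $i$ on a site whose most recent previous visit occurred at least $k+1$ steps earlier. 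Monotonicity in $k$ is then immediate: $1_{Y_i\notin_k}$ decreases to $1_{Y_i\notin}$, so $N^{(k)}_n\geq N^{(k+1)}_n\geq N_n$ for every $n$, and dividing by $n$ and letting $n\to\infty$ gives $N^{(k)}(\be)\geq N^{(k+1)}(\be)\geq N(\be)$. For convergence to $N(\be)$ at a fixed $\be$ I would argue separately: for $\be>0$ the renewal representation yields $N^{(k)}(\be)-N(\be)=\hat{\Eb}_{\be}[\sum_{i=0}^{\tau-1}1_{A_{i,k}}]/\hat{\Eb}_{\be}\tau$, and since $\tau<\infty$ a.s.\ the numerator tends to $0$ by dominated convergence (the sum vanishes once $k\geq\tau$); for $\be=0$ one uses $N^{(k)}(0)=\Pb_0(Y_k\notin)$ together with time reversal, which exhibits $\Pb_0(Y_k\notin)$ decreasing to the escape probability $N(0)$.

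The heart of the matter is the uniform estimate for $d\geq 4$, and the first step is the structural fact that the transverse motion is insensitive to the bias. At each step the total probability of a move along the $e_1$-axis equals $1/d$, whether or not the current site is excited, while the conditional law of a transverse move is always uniform on the $2(d-1)$ directions $\pm e_2,\dots,\pm e_d$ with weight $1/(2d)$ each. Consequently the transverse increments $W_t:=Z_t-Z_{t-1}$ form an i.i.d.\ sequence whose law does not depend on $\be$, so that $W_t=0$ with probability $1/d$ and $W_t$ is one of the $2(d-1)$ transverse unit vectors each with probability $1/(2d)$. Thus $Z_n$ is a fixed lazy symmetric random walk on $\Zb^{d-1}$, identical for all $\be\in[0,1]$, and $Z_i-Z_{i-m}$ has the law of $Z_m$.

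Next I would bound the long-range return event. Since $A_{i,k}\subseteq\bigcup_{m=k+1}^{i}\{Y_i=Y_{i-m}\}$ and $\{Y_i=Y_{i-m}\}\subseteq\{Z_i=Z_{i-m}\}$, we obtain for every $\be$ and every $i$
$$\Pb_{\be}(A_{i,k})\leq\sum_{m=k+1}^{\infty}\Pb_{\be}(Z_i=Z_{i-m})=\sum_{m=k+1}^{\infty}\Pb(Z_m=0)=:\rho_k,$$
the middle equality using the $\be$-free law of $Z_m$. By the local central limit theorem for the aperiodic symmetric walk $Z$ on $\Zb^{d-1}$ there is $C=C(d)$ with $\Pb(Z_m=0)\leq Cm^{-(d-1)/2}$, so for $d\geq 4$ one has $(d-1)/2\geq 3/2>1$, the series converges, and $\rho_k\to 0$ with $\rho_k$ independent of $\be$ and of $i$. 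Writing $N^{(k)}_n-N_n=\sum_{i=0}^{n-1}1_{A_{i,k}}$, dividing by $n$ and taking expectations gives $\frac1n\sum_{i=0}^{n-1}\Pb_{\be}(A_{i,k})\leq\rho_k$; letting $n\to\infty$ and using dominated convergence (the averages are bounded by $1$ and converge a.s.\ to the constant $N^{(k)}(\be)-N(\be)$) yields
$$0\leq N^{(k)}(\be)-N(\be)\leq\rho_k\quad\text{for all }\be,$$
which is precisely the claimed uniform convergence.

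I expect the main obstacle to be the uniformity in $\be$: for small $\be$ the drift $v(\be)$ is tiny, so returns cannot be made rare by exploiting the longitudinal drift in a $\be$-uniform way. The resolution is to discard the $e_1$-coordinate entirely and rely only on the transverse walk, whose transience for $d-1\geq 3$ is completely independent of $\be$; this is exactly where the hypothesis $d\geq 4$ enters, through the convergence of $\sum_m m^{-(d-1)/2}$.
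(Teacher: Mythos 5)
Your proposal is correct and follows essentially the same route as the paper: you bound the discrepancy $1_{Y_i\notin_k}-1_{Y_i\notin}$ by return events of the transverse walk, use that the law of $(Z_n)$ is a lazy symmetric walk on $\Zb^{d-1}$ independent of $\be$, and invoke the local limit estimate $\Pb(Z_m=0)\leq Cm^{-(d-1)/2}$ so that the tail $\sum_{m>k}m^{-(d-1)/2}$ gives a $\be$-uniform bound converging to $0$ exactly when $d\geq 4$. The only cosmetic difference is that the paper derives the decay of $\Pb(Z_m=0)$ from the embedded non-lazy walk $\tilde{Z}$ by conditioning on the number of lazy steps, whereas you cite the local CLT for the lazy walk directly.
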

\begin{proof}
Indeed, we have
\begin{align}\label{estuniv}
&|\Pb_{\be}(Y_n\notin^k)-\Pb_{\be}(Y_n\notin)|=\Pb_{\be}[(Y_n\notin^k)\cap[(Y_n=Y_{n-k-1})\cup(Y_n=Y_{n-k-2})...\cup(Y_n=Y_0)]]\notag\\
&\leq \sum_{j=0}^{n-k-1}\Pb_{\be}(Y_n=Y_j)\leq\sum_{j=0}^{n-k-1}\Pb_{\be}(Z_n=Z_j)=\Pb(Z_{k+1}=0)+\Pb(Z_{k+2}=0)+...+\Pb(Z_{n}=0)\notag\\
&\leq \sum_{j=k+1}^{\infty}\Pb(Z_{j}=0).
\end{align}
From the sequences $(Z_k)_{k\in\Zb}$, $(\et_k)_{k\in\Zb}$ , $(\xi_k)_{k\in\Zb}$, $(\ze_k)_{k\in\Zb}$, 
we can construct the ERW $(Y_n)_{n\geq 0}$ just as in the first construction. We also define  the sequence 
$(\tilde{Z}_k)_{k\in\Zb}$ as the sequence of "moves" of $Z$. More precisely, $(\tilde{Z}_k)_{k\in\Zb}$ is the 
unique sequence such that:
\begin{equation}
\label{lienZ-tildeZ}\forall n\geq 0,\quad Z_n= \begin{cases}
		\tilde{Z}_{\sum_{i=0}^{n-1}(1-\et_i)} & \mbox{ if } n \geq 0 \, ; \\
		\tilde{Z}_{\sum_{i=n}^{-1}(1-\et_i)} & \mbox{ if } n < 0 \, .
		\end{cases}
\end{equation}
$\et_k:=1_{Z_k=Z_{k+1}}$
Set $\et_i=1_{Z_i=Z_{i+1}}$ and $U=\sum_{i=0}^{n-1}\et_i$. We define $(\tilde{Z}_k)_{k\in\Zb}$ as the sequence of "moves" of $Z$, see \eqref{lienZ-tildeZ}.
Using \cite{Spi01}, page 75, we obtain $\Pb(\tilde{Z}_i=0)\sim i^{_-\frac{d-1}{2}}.$
We have 
\begin{align*}
&\Pb(Z_n=0)=\sum_{k=0}^{n}\Pb(\tilde{Z}_k=0).C_n^{n-k}\left(\frac{1}{d}\right)^{n-k}\left(\frac{d-1}{d}\right)^k\\
&=\sum_{k=0}^{\frac{n}{2d}}\Pb(\tilde{Z}_k=0).C_n^{n-k}\left(\frac{1}{d}\right)^{n-k}\left(\frac{d-1}{d}\right)^k+\sum_{k=\frac{n}{2d}+1}^{n}\Pb(\tilde{Z}_k=0).C_n^{n-k}\left(\frac{1}{d}\right)^{n-k}\left(\frac{d-1}{d}\right)^k.
\end{align*}
We estimate the first term:
\begin{align*}
&\sum_{k=0}^{\frac{n}{2d}}\Pb(\tilde{Z}_k=0).C_n^{n-k}\left(\frac{1}{d}\right)^{n-k}\left(\frac{d-1}{d}\right)^k\leq \sum_{k=0}^{\frac{n}{2d}}C_n^{n-k}\left(\frac{1}{d}\right)^{n-k}\left(\frac{d-1}{d}\right)^k\\
&=\Pb\left(\frac{U-n.\frac{1}{d}}{\sqrt{n}}\leq\frac{-\sqrt{n}}{2d}\right)
\sim\int_{-\infty}^{-\frac{\sqrt{n}}{2d}}\frac{1}{\sqrt{2\pi}}e^{-\frac{x^2}{2}}dx
\text{ converging to }0 \text{ when } n\to\infty.
\end{align*}
On the other hand
\begin{align*}
\sum_{k=\frac{n}{2d}+1}^{n}\Pb(\tilde{Z}_k=0).C_n^{n-k}\left(\frac{1}{d}\right)^{n-k}\left(\frac{d-1}{d}\right)^k\sim C.n^{-\frac{d-1}{2}}
\end{align*}
since 
$$
\frac{1}{2}\leq\sum_{k=\frac{n}{2d}+1}^{n}C_n^{n-k}\left(\frac{1}{d}\right)^{n-k}\left(\frac{d-1}{d}\right)^k\leq 1
$$
and
$$
\Pb(\tilde{Z}_k=0)\sim C'.n^{-\frac{d-1}{2}}\text{ for }\frac{n}{2d}\leq k\leq n.
$$
From \eqref{estuniv} we get,
$$
|\Pb_{\be}(Y_n\notin^k)-\Pb_{\be}(Y_n\notin)|\leq C\sum_{j=k+1}^{\infty}j^{-\frac{d-1}{2}}.
$$
It implies
\begin{align*}
\left|\Eb_{\be}\left(\frac{N^{(k)}_n}{n}\right)-\Eb_{\be}\left(\frac{N_n}{n}\right)\right|&\leq\frac{1}{n}\sum_{i=0}^{n-1}|\Pb_{\be}(Y_i\notin^k)-\Pb_{\be}(Y_i\notin)|\\
&\leq C\sum_{j=k+1}^{\infty}j^{-\frac{d-1}{2}}.
\end{align*}
Let $n$ converge to infinity then $|N^{k}(\be)-N(\be)|\leq C\sum_{j=k+1}^{\infty}j^{-\frac{d-1}{2}}$. If $d\geq 4$ then $N^{k}$ converges uniformly to $N$ in $\be.$
\end{proof}
Now, we return to the proof of the point 2 of Theorem \ref{smallD}. By $\frac{v(\be)}{\be}=N(\be)$, to prove the existence of the derivative at $0$ we need to prove that $N(\be)$ is continuous at $0.$
It is known that $N^{k}$ converges uniformly to $N$ in $\be$ for $d\geq 4$, then there is just one thing left is to show that $N^{k}(\be)$ is continuous at $0.$
Indeed, 
\begin{align*}
&|\Pb_{\be}(Y_n\notin^k)-\Pb_{0}(Y_n\notin^k)|\\
&=\left|\Eb_0\left[1_{Y_n\notin^k}\prod_{j=0}^{n-1}(1+\E_j\be 1_{Y_j\notin})\right]-\Eb_0\left[1_{Y_n\notin^k}\prod_{j=0}^{n-k}(1+\E_j\be 1_{Y_j\notin})\right]\right|\\
&\leq \Eb_0\left[1_{Y_n\notin^k}\prod_{j=0}^{n-k}(1+\E_j\be 1_{Y_j\notin})\left|\prod_{j=n-k+1}^{n-1}(1+\E_j\be 1_{Y_j\notin})-1\right|\right]\\
&\leq [(1+\be)^{k-1}-1]\Eb_0\left[1_{Y_n\notin^k}\prod_{j=0}^{n-k}(1+\E_j\be 1_{Y_j\notin})\right]=[(1+\be)^{k-1}-1]\Pb_0(Y_n\notin^k)\\
&\leq [(1+\be)^{k-1}-1].
\end{align*}
Hence, $\left|\Eb_{\be}\left(\frac{N^{(k)}_n}{n}\right)-\Eb_{0}\left(\frac{N^{(k)}_n}{n}\right)\right|\leq [(1+\be)^{k-1}-1] $ and $|N^k(\be)-N^k(0)|\leq[(1+\be)^{k-1}-1].$
This implies that $N^k(\be)$ is continuous at $0.$
Therefore, for $d\geq 4$ then
\begin{itemize}
\item $N^k(\be)$ converges uniformly to $N(\be)$ when $k\to\infty,$
\item $N^k(\be)$ is continuous for every $k>1.$
\end{itemize}
We deduce that $N(\be)$ is continuous at $0$, it means that $$\lim_{\be\to 0}\frac{v(\be)}{\be}=\frac{1}{d} N(0)=\frac{1}{d}\lim_{n\to\infty}\frac{R_n}{n}=\frac{1}{d}R(0).$$
Notice that $R_n=\Eb_0(N_n)$\\
For $d=2$, $R(0)=N(0)=0$, see in \cite{LGR91}. Let $\si>0$, on one hand, since $N^k(0)$ decreases to $N(0)$ when $k\to\infty$ then there exists $k_0$ such that $N^k(0)<\si$ for all $k\geq k_0.$ On the other hand, $N^{k_0}(\be)$ is continuous at $0$ then there exists $\be_0>0$ such that $|N^{k_0}(\be)-N^{k_0}(0)|<\si$ for all $\be<\be_0.$ Since $N(\be)\leq N^{k_0}(\be)$, $N(\be)\leq N^{k_0}(\be)\leq N^{k_0}(0)+\si<2\si$ for all $\be<\be_0.$ This implies that $\lim_{\be\to 0}N(\be)=N(0)=0.$ 
Therefore $\lim_{\be\to 0}\frac{v(\be)}{\be}=0.$

For $d=3$, because of $N(\be)\leq N^k(\be)$ for all $k>1$ then 
$$
\limsup_{\be\to 0}N(\be)\leq\lim_{k\to\infty}\limsup_{\be\to 0}N^k(\be)
=\lim_{k\to\infty}N^k(0)=\frac{1}{d}R(0).
$$

\section{The proof of Theorem \ref{monotonesmall}}
\subsection{The monotonicity of the range of the simple random walk}

Firstly, for the range of the simple random walk, we have a known result as follows (see \cite{Spi01}, \cite{DE51}):
$$R(\be)=\Pb_{\infty,\be}[Y_0\notin Y_{[1,\infty)}]$$
Then, we obtain:
\begin{align}\label{step1}
1-R(\be)&=\Pb_{\infty,\be}[\exists n>0 \text{ such that }Y_n=Y_0=0]\notag\\
&=\Pb_{\infty,\be}\left\{\bigcup_{k=1}^{\infty}[Y_{2k}=0\text{ and }0\notin Y_{[1,2k)}]\right\}\notag\\
&=\sum_{k=1}^{\infty}\Pb_{\infty,\be}\left\{[Y_{2k}=0\text{ and }0\notin Y_{[1,2k)}]\right\}.
\end{align}
On the other hand, we see that the trajectories with $2k$ steps $\{y_0=0,y_1,y_2,...,y_{2k-1},y_{2k}=0\}$ start from the origin and return at the origin at the time $2k$ whose number of jumps to the left equal to the number of jumps to right that we denote equal to $a_1.$
Therefore
\begin{align}\label{step2}
&\Pb_{\infty,\be}\left\{[Y_{2k}=0\text{ et }0\notin Y_{[1,2k)}]\right\}\notag\\
&=\sum_{\{y_0=0,y_1,...,y_{2k}=0\}}\left(\frac{1+\be}{2d}\right)^{a_1}\left(\frac{1-\be}{2d}\right)^{a_1}\left(\frac{1}{2d}\right)^{a_2}\text{ where }2a_1+a_2=2k\notag\\
&=\sum\left(\frac{1-\be^2}{(2d)^2}\right)^{a_1}\left(\frac{1}{2d}\right)^{a_2}.
\end{align}
 From \eqref{step1} and \eqref{step2}, we imply that $1-R(\be)$ is decreasing then $R(\be)$ is increasing in $\be$.
 \subsection{The monotonicity of the speed of excited random walk with several identical cookies}
To prove the monotonicity of the speed for $m$-ERW we need the following lemma:
\begin{lem}\label{lemabasic}
Let $J$ be an interval of $\Rb$ and $\{X_n(\be)\}_{\be \in J, n \geq 1}$, $\{X(\be)\}_{\be\in J}$
the families of positive random variables. Under supposition that 
\begin{enumerate}
\item for every $n$, $\{ X_n(\be)\}_{\be\in J}$ is  uniformly integrable,
\item $\{X(\be)\}_{\be\in J}$ is uniformly integrable,
\item $X_n(\be)$ converges in probability to $X(\be)$, uniformly in $\be$: for every $\ep >0$, 
$$ \lim_{n \to +\infty} \sup_{\be \in J} \Pb ( \left| X_n(\be) - X(\be) \right| > \ep) =0 \, . 
$$ 
\end{enumerate}
Then, $\lim_{n \rightarrow + \infty}  \sup_{\be \in J} \left| \Eb(X_n(\be))  -\Eb(X(\be)) \right| = 0$
 if and only if $\{X_n(\be)\}_{n\in\Nb,\be\in J}$ is uniformly integrable.
\end{lem}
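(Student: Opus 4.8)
The plan is to reduce both implications to the standard characterization of uniform integrability: a family $\{Y_i\}$ is uniformly integrable if and only if it is bounded in $L^1$ and uniformly absolutely continuous, i.e. for every $\eta>0$ there is $\de>0$ such that $\Pb(E)<\de$ forces $\sup_i\Eb(|Y_i|1_{E})<\eta$. Writing $A_{n,\be}=\{|X_n(\be)-X(\be)|>\de\}$, hypothesis (3) says precisely that $\sup_{\be\in J}\Pb(A_{n,\be})\to0$ for each fixed $\de$, which is what will let me feed small sets into the absolute-continuity estimate uniformly in $\be$. For the \emph{if} direction I would assume $\{X_n(\be)\}_{n,\be}$ uniformly integrable and bound $|\Eb X_n(\be)-\Eb X(\be)|\le\Eb|X_n(\be)-X(\be)|$, splitting the right side over $A_{n,\be}$ and its complement. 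On the complement the integrand is at most $\de$; on $A_{n,\be}$ I use nonnegativity, $|X_n-X|\le X_n+X$, and the uniform absolute continuity of $\{X_n(\be)\}_{n,\be}$ together with that of $\{X(\be)\}_{\be}$ (hypothesis (2)). Since $\sup_\be\Pb(A_{n,\be})$ falls below the threshold for $n$ large, the tail contributions are uniformly small, and letting $\de\to0$ gives $\sup_\be|\Eb X_n(\be)-\Eb X(\be)|\to0$.

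For the \emph{only if} direction I would first upgrade the convergence of expectations to a uniform $L^1$ convergence by a Scheffé-type argument. The identity $|X_n-X|=(X_n-X)+2(X-X_n)^+$ gives $\Eb|X_n(\be)-X(\be)|=\bigl(\Eb X_n(\be)-\Eb X(\be)\bigr)+2\Eb\bigl((X(\be)-X_n(\be))^+\bigr)$. The first summand tends to $0$ uniformly by the hypothesis of this direction. For the second, I would use the a priori domination $(X(\be)-X_n(\be))^+\le X(\be)$ and split again over $A_{n,\be}$: the part on the complement is $\le\de$, while the tail $\Eb\bigl(X(\be)1_{A_{n,\be}}\bigr)$ is controlled by the uniform absolute continuity of $\{X(\be)\}_\be$ from hypothesis (2) combined with $\sup_\be\Pb(A_{n,\be})\to0$. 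Hence $\sup_\be\Eb\bigl((X(\be)-X_n(\be))^+\bigr)\to0$, and therefore $\sup_\be\Eb|X_n(\be)-X(\be)|\to0$.

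It then remains to deduce uniform integrability of the full double-indexed family from this uniform $L^1$ convergence. First, $\sup_{n,\be}\Eb X_n(\be)\le\sup_\be\Eb X(\be)+\sup_\be\Eb|X_n(\be)-X(\be)|=:M<\infty$, so the family is $L^1$-bounded and Markov's inequality yields $\Pb(X_n(\be)>K)\le M/K$ uniformly in $n,\be$. Fixing $\eta>0$, I choose $N$ with $\sup_\be\Eb|X_n(\be)-X(\be)|<\eta/2$ for $n>N$, and for such $n$ write $\Eb\bigl(X_n(\be)1_{\{X_n(\be)>K\}}\bigr)\le\Eb|X_n(\be)-X(\be)|+\Eb\bigl(X(\be)1_{\{X_n(\be)>K\}}\bigr)$; the first term is $<\eta/2$ and the second is $<\eta/2$ for $K$ large, by the uniform Markov bound feeding into the uniform absolute continuity of $\{X(\be)\}_\be$. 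For the finitely many indices $n\le N$, hypothesis (1) gives $\sup_\be\Eb\bigl(X_n(\be)1_{\{X_n(\be)>K\}}\bigr)\to0$ as $K\to\infty$, and a finite maximum preserves this limit. Combining both ranges of $n$ shows $\lim_{K\to\infty}\sup_{n,\be}\Eb\bigl(X_n(\be)1_{\{X_n(\be)>K\}}\bigr)=0$, i.e. the family is uniformly integrable.

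The one genuinely non-routine step, and the place where I expect the work to concentrate, is the uniform Scheffé argument of the second paragraph: passing from $\Eb X_n(\be)\to\Eb X(\be)$ to $L^1$-convergence while keeping every estimate uniform in $\be$. The positive-part identity is what makes this possible, because it reduces the task to bounding $\Eb\bigl((X(\be)-X_n(\be))^+\bigr)$, a quantity dominated by $X(\be)$; the uniform absolute continuity of $\{X(\be)\}_\be$ then supplies a tail bound that is uniform in $\be$, which one could not obtain directly from $X_n(\be)$. Everything else is the bookkeeping of splitting over $A_{n,\be}$ and invoking the uniform-absolute-continuity characterization of uniform integrability.
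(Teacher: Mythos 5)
Your proof is correct and complete. Note that the paper states this lemma without giving any proof at all, so there is no argument of the author's to compare yours against; what you have written would in fact fill a genuine gap in the text. Both directions are handled properly: the \emph{if} direction is the standard splitting of $\Eb|X_n(\be)-X(\be)|$ over $A_{n,\be}=\{|X_n(\be)-X(\be)|>\de\}$ and its complement, using the uniform absolute continuity of the doubly indexed family and of $\{X(\be)\}_{\be}$ together with $\sup_{\be}\Pb(A_{n,\be})\to0$; and in the \emph{only if} direction you correctly isolate the one nontrivial step, namely the uniform Scheff\'e upgrade from $\sup_{\be}|\Eb X_n(\be)-\Eb X(\be)|\to0$ to $\sup_{\be}\Eb|X_n(\be)-X(\be)|\to0$ via the identity $|X_n-X|=(X_n-X)+2(X-X_n)^{+}$ and the domination $(X-X_n)^{+}\le X(\be)$, after which uniform integrability of the double family follows by transferring tails to $\{X(\be)\}_{\be}$ for large $n$ and invoking hypothesis (1) for the finitely many remaining $n$. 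The only cosmetic slip is the line $\sup_{n,\be}\Eb X_n(\be)\le\sup_{\be}\Eb X(\be)+\sup_{\be}\Eb|X_n(\be)-X(\be)|$, whose right-hand side still depends on $n$; it should carry a supremum over $n$ as well, which is harmless because that quantity tends to $0$ and is therefore bounded in $n$.
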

This lemma is proved in the end of \cite{Pha15}.

\subsubsection{Coupling of random walks}\label{coup}

The method coupling is usually used to prove the recurrence or the monotonicity of random walks. For example, in \cite{BW03}, the authors coupled an excited random (ERW) walk with a simple symmetric random walk to prove the recurrence of ERW. In \cite{BFS11}, this method is used to prove the monotonicity by coupling tree random walks, two biased random walks with bias respectively $\be$ and $\be+\ep$ on the Galton-Watson tree, a simple random walk with bias $\be$ on $\Zb.$ In our paper, to prove the monotonicity of the speed on $[\be_0,1]$ we need to couple the random walk $\bar{Y}$ of bias $\be_0$ with the $m$-ERW $Y$ of bias $\be$ where $\be\geq \be_0$.

Let a probability space:
$$
\Om=(\Zb^{d-1})^{\Zb}\times(\{0,1\}^3)^{\Zb}
$$
endowed the  product $\si$-algebra $\F$ and the product probability 
$$
\Pb=q^{\Zb}\otimes p^{\Zb}
$$
where
\begin{itemize}
\item $q$ is the law of the increments of $Z$ i.e. for $e\in\Zb^{d-1}$ then $q(e)=\frac{1}{2d}$ if $|e|=1$ and $q(e)=0$ if $e=0$,
\item For $(x,y,z)\in \{0,1\}^3$, let $ p_{xyz}=p[(x,y,z)]$ such that:
\begin{gather} p_{111}=\frac{1}{2},p_{011}=\frac{\be_0}{2},p_{001}=\frac{\be-\be_0}{2},p_{000}=\frac{1-\be}{2} \text{ and for the other cases} p_{xyz}=0.\label{dis}
\end{gather} 
\end{itemize}Now, we take $\om=(u,v,l,h)\in \Om$ with $u\in{(\Zb^{d-1})}^{\Zb}$, $(v,l,h)\in(\{0,1\}^3)^{\Zb}$. 
Let $(\theta_n)_{n\in\Zb}$ be the canonical shift on $\Om$ and $(I_n, \xi_n, \ze_n, \bar{\ze}_n)_{n\in\Zb}$ be the canonical process: 
$$
I_n(\om)=u_n \in \Zb^{d-1} \, , \, \,
\xi_n(\om)=v_n \in  \{0,1\} \, , \,\,  \bar{\ze}_n(\om)=l_n \in  \{0,1\} \, , \,\, 
\ze_n(\om)=h_n \in  \{0,1\} \, .
$$ 
Define $Z$ as follows:
\begin{equation}
Z_n=
\begin{cases}
I_1+I_2+...+I_n, &n\geq 1,\\
0, &n=0,\\
-(I_{n+1}+...+I_0), &n\leq -1
\end{cases}
\end{equation}
then $(Z_n)_{n\in\Zb}$ is a symmetric simple random walk on $\Zb^{d-1}$ such that $Z_0=0,\, a.s.$ and it does not move at every site with probability $\frac{1}{d}$, it jumps from a site to every next site with probability $\frac{1}{2d}$.
Let $\et_i:=1_{Z_i=Z_{i+1}}$. By the construction above
$(\et_i)_{i\geq 0}$,$(\xi_i)_{i\geq 0}$, $(\ze_i)_{i\geq 0}$ and  $(\bar{\ze}_i)_{i\geq 0}$ such that the random vectors of the sequence $((\et_i,\xi_i,\ze_i,\bar{\ze}_i))_{i\in\Zb}$ are independent, two sequences $(\et_i)_{i\in\Zb}$ and $((\xi_i,\ze_i,\bar{\ze}_i))_{i\in\Zb}$ are independent. On the other hand, the vector $(\et_i,\xi_i,\ze_i,\bar{\ze}_i)$ satisfies:
\begin{itemize}
\item$
\et_i\sim Ber\left(\frac{1}{d}\right),\xi_i\sim Ber\left(\frac{1}{2}\right),\bar{\ze}_i\sim Ber\left(\frac{1+\be_0}{2}\right),\ze_i\sim Ber\left(\frac{1+\be}{2}\right)\text{ and }\xi_i\leq \bar{\ze}_i \leq \ze_i.
$
\item $\Pb(\xi_i=x,\bar{\ze}_i=y,\ze_i=z)=p_{xyz}$ where $x,y,z\in\{0,1\}$.
\end{itemize}
 Let $Z_{(-\infty,n)}=\{Z_m, m<n\}$. Denote by $\{Z_n\notin\}$ the event $\{Z_n\notin Z_{(-\infty,n)}\}$ and we say that $Z_n$ is new. The complement of $\{Z_n\notin\}$  is denoted by $\{Z_n\in\}$ and we say $Z_n$ is old. We then define two random walks $(Y_n)_{n\geq 0}$ and $(\bar{Y}_n)$ as follows: Let $(Y_n)_{n\geq 0}$ be a random walk such that $Y_0=0,\,a.s.$, the vertical component $(Y_n\cdot e_2, Y_n\cdot e_3,...,Y_n\cdot e_d)=Z_n$. Let $\{Y_n\notin^m_{\leq}\}=[\#\{0\leq i\leq n, Y_i=Y_n\}\leq m]$ and the complement is denoted by $\{Y_n\in^m_{\leq}\}$. The horizontal component $X_n=Y_n\cdot e_1$ satisfies:
 \begin{itemize}
 \item  On the event $\{Y_n\notin^m_{\leq}\}$  then $\bar{\E}_n= X_{n+1}-X_n=(2\ze_n-1)1_{Z_n= Z_{n+1}}$.
 \item On the event $\{Y_n\in^m_{\leq}\}$, then $\E_n=X_{n+1}-X_n=(2\xi_n-1)1_{Z_n= Z_{n+1}}.$
  \end{itemize}
With the construction above, $(Y_n)$ is a $m-$ERW with bias $\be$.
Now, we will define the random walk $(\bar{Y}_n)$. We set $\bar{Y}_0=0$ $ a.s.$ The vertical component is $(\bar{Y}_n\cdot e_2,\bar{Y}_n\cdot e_3,...,\bar{Y}_n\cdot e_d)=Z_n,\, n\geq 0.$
The horizontal component is defined as follows:
\begin{itemize}
\item If $Z_n$ is new, set $\bar{\E}_n=\bar{X}_{n+1}-\bar{X}_n=(2\ze_n-1)1_{Z_n= Z_{n+1}}$,
\item If $Z_n$ is old, set $\E_n=\bar{X}_{n+1}-\bar{X}_n=(2\xi_n-1)1_{Z_n= Z_{n+1}}$.
\end{itemize}
Note that $\bar{Y}$ is a random walk with stationary increments. We call this random walk by SIRW.
The coupling above implies that $\bar{X}_{n+1}-\bar{X}_n\leq X_{n+1}-X_n$. Hence if $(\tau_n)_{n\geq 1}$ are renewal times of $\bar{Y}$ then they are also renewal times of $Y$.
We will use this property to prove the monotonicity of the speed of $m-$ERW when $m$ is large enough.
 Let $\D(\om)=\{n\in\Zb, X_{(-\infty,n-1]}(\om)<X_n(\om)\leq X_{[n,+\infty)}(\om)\}$ be the set of renewal times and the stationary point process $N(\om,dk)=\sum_{n\in\Zb}\de_n(dk)1_{n\in\D(\om)}$. We consider $$W=\{\om\in\Om, N(\om,(-\infty,0])=N(\om,[0,+\infty))=\infty\}.$$ Let $\{\tau_n\}$ be the sequence of renewal times of the walk $\{\bar{Y}_n\}$ such that $-\infty<...<\tau_{-2}<\tau_{-1}<\tau_0\leq 0<\tau_1<\tau_2<...<+\infty$.
By the construction, $\{\bar{Y}_n\}$ satisfies:
$$
\bar{Y}_{n+1}-\bar{Y}_n=\bar{Y}_1\circ\theta_n
$$
Hence, combining with the fact that $(\Om,\Pb,\theta)$ is ergodic then $\{\bar{Y}_n\}$ has the speed  $$\Pb-a.s.,\,\,\bar{v}(\be)=\lim_{n\to\infty}\frac{\bar{X}_n}{n}=\frac{\be}{d}\Pb(Z_0\notin).$$
For $d\geq 4$ we have $\bar{v}(\be)>0$, using the idea on the estimation of renewal times in \cite{MPRV12}, \cite{MP14} to get that
\begin{lem}\label{esttaubar}
Let $(\bar{Y}_n)_{n\in \Zb}$ be a SIRW on $\Zb^d$ for $d\geq 4$, with bias $\be\in(0,1]$ fixed and $(\tau_k,k\in \Zb)$ is the sequence of the renewal times respectively. Then, there exists $C,\al>0$ such that for every $n\in\Zb,$
$$\sup_{k\in\Zb}\Pb_{\be}[\tau_{k+1}-\tau_k|\G_0^{(k)}]\leq Ce^{-n^{\al}}\, a.s.$$
In particular, for every $k\in\Zb$ et $p\geq 1$ we have that $\tau_k<\infty\, p.s$ and $\Eb_{\be}[(\tau_{k+1}-\tau_k)^p]<\infty.$
\end{lem}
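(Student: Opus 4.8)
The statement is the two-sided, stationary counterpart of Lemma \ref{esttau}, so the plan is to transport the regeneration argument of \cite{BR07} and \cite{MPRV12} to the stationary walk $\bar{Y}$ constructed in Section \ref{coup}. The first, essentially free, reduction is to exploit stationarity. Since $\bar{Y}$ is a stationary random walk on the shift-invariant space of \cite{Pha13}, the renewal structure and the $\si$-algebras $\G_0^{(k)}$ are defined translation-covariantly, so the triple consisting of the forward trajectory $(\bar{Y}_{\tau_k+\cdot}-\bar{Y}_{\tau_k})$, the increment $\tau_{k+1}-\tau_k$, and the conditioning $\G_0^{(k)}$ has a law independent of $k\in\Zb$. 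Consequently the conditional law of $\tau_{k+1}-\tau_k$ given $\G_0^{(k)}$ does not depend on $k$, the supremum over $k\in\Zb$ collapses to a single increment, and it suffices to bound $\Pb_{\be}[\tau_1-\tau_0>n\mid\G_0^{(0)}]$. This removes the only genuinely new feature, the bi-infinite index set, and reduces the problem to a forward estimate of exactly the shape treated in \cite{MPRV12}.

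For that forward estimate I would reproduce the backtracking/renewal scheme. Calling a time at which $\bar{X}$ attains a strict new maximum a \emph{record}, the argument rests on two uniform bounds. First, from a record there is a probability $p_0>0$, bounded below uniformly in the past, that the walk never descends behind the current record hyperplane; this is precisely where the positive horizontal drift $\bar{v}(\be)=\frac{\be}{d}\Pb(Z_n\notin)>0$ (valid for $d\ge 4$) enters, the drift to the right being supplied by the fresh vertical sites of the transient walk $Z$ on $\Zb^{d-1}$. Second, conditionally on a failed attempt, the overshoot and the return time carry stretched-exponential tails. Counting the geometrically many failed attempts before the first success and summing their durations yields, by the standard subadditive / large-deviation estimate, the bound $\Pb_{\be}[\tau_1-\tau_0>n\mid\G_0^{(0)}]\le Ce^{-n^{\al}}$ with $C,\al>0$; the same positivity of $\bar{v}(\be)$ guarantees that on $W$ there are a.s. infinitely many records of each sign, hence $\tau_k<\infty$ a.s.

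The moment statement is then immediate by integrating the tail: for every $p\ge 1$,
\[
\Eb_{\be}[(\tau_{k+1}-\tau_k)^p]=\int_0^{\infty}p\,t^{p-1}\,\Pb_{\be}[\tau_{k+1}-\tau_k>t]\,dt\le C\int_0^{\infty}p\,t^{p-1}e^{-t^{\al}}\,dt<\infty ,
\]
and in particular each increment is integrable of all orders, uniformly in $k$ by the stationarity reduction.

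I expect the main obstacle to be the first uniform bound in the \emph{non-Markovian} setting. Unlike a genuine random walk, the increments of $\bar{Y}$ depend on the entire novelty history of the vertical component, so restarting at a record is not a clean Markov restart. The decisive point to push through is that all of this history dependence factors through $Z$, which is a bona fide Markov chain and, for $d\ge 4$, transient; one must verify that conditionally on the past up to a record time the future still encounters enough fresh vertical sites to keep the horizontal drift bounded below, and that this survives the conditioning on $\G_0^{(0)}$. Once this ``restart with a forward drift bounded below'' is established uniformly, the remainder is the verbatim \cite{MPRV12} machinery.
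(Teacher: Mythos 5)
Your proposal is sound and follows the same route the paper takes: the paper offers no written proof of Lemma \ref{esttaubar} at all, merely the remark that one should use ``the idea of proof in \cite{MPRV12}'' (exactly as it does for Lemma \ref{esttau} and for the uniform tail bound in Section 3), so the intended argument is precisely the transfer of the regeneration-tail machinery of \cite{BR07}, \cite{MPRV12} that you sketch. Your stationarity reduction of the supremum over $k\in\Zb$ to a single increment, and your observation that the excitation history of $\bar{Y}$ factors through the transient vertical walk $Z$ (whence the uniform drift lower bound, which requires $d\ge 4$ and $\be>0$), actually supply more detail than the paper itself does.
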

Set $\bar{D}_+=\{\bar{X}_n\geq 0\text{ for all }n\geq 0\}$, $\bar{D}_-=\{\bar{X}_n< 0\text{ for all }n< 0\}$, $D=\bar{D}_+\cap\bar{D}_-$ and $\hat{\th}=\th_{\tau}$.
It is similar as for simple random walk with drif $\be$, we believe that:
$\Pb(\bar{D}_+)={c_1(\be)}{\be^2}, \Pb(\bar{D}_-)={c_2(\be)}{\be^2}$ where for some positive constants $c_0,c_3 $ then $c_0\leq c_1(\be),c_2(\be)\leq c_3$. In fact, we do not use these properties for the proof of Theorem \ref{monotonesmall} then we do not prove them.
To prove Theorem \ref{monotonesmall}, we need one more lemma as follows:
\begin{lem} \label{lemmasta}
$\Pb(D)>0$ and $\Pb(W)=1$. Under $\hat{\Pb}(.)=\Pb(.|\bar{D}_+,\bar{D}_-
)$ i.e $\tau_0=0$ the sequence $\{\tau_{n+1}-\tau_n\}_{n\in\Zb}$ is stationary. Morever, the triples $(\Om,\Pb,\th)$ and $(\Om,\hat{\Pb},\hat{\th})$ are ergodic systems.
\end{lem}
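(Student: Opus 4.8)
The plan is to read this statement as a bundle of facts about a stationary ergodic dynamical system and the first-return (induced) transformation attached to the event $D$. The key observation is that $D$ is exactly the event ``$0$ is a renewal time of $\bar{Y}$'': unwinding the definitions, $\om\in D$ means $X_n<X_0$ for $n<0$ and $X_n\ge X_0$ for $n\ge 0$, which is precisely $0\in\D(\om)$. By the shift-covariance of the whole construction of Section \ref{coup}, this gives $n\in\D(\om)$ if and only if $\th_n\om\in D$, so the renewal times $\{\tau_n\}$ are nothing but the successive times at which the orbit $(\th_k\om)_{k\in\Zb}$ visits $D$. Once this identification is made, every assertion of the lemma reduces to a statement about return times to $D$ under $\th$.

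First I would establish that $(\Om,\Pb,\th)$ is ergodic. By the construction the walk $\bar{Y}$ is a deterministic, shift-covariant function of the i.i.d. driving data (the increments of $Z$ together with the couples $(\xi_n,\ze_n)_{n\in\Zb}$), so $(\Om,\Pb,\th)$ is a factor of a two-sided Bernoulli shift and is therefore mixing, in particular ergodic. Next I would prove $\Pb(D)>0$. Since $d\ge 4$ yields $\bar{v}(\be)>0$, the walk is ballistic to the right; the probability that it never backtracks forward, $\Pb(\bar{D}_+)>0$, is obtained exactly as the one-sided estimate of \cite{BR07,MPRV12}, and the backward statement $\Pb(\bar{D}_-)>0$ is its time-reversed analogue. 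Conditioning on the vertical trajectory $Z$, given which the forward and backward horizontal increments are conditionally independent, then yields $\Pb(D)=\Pb(\bar{D}_+\cap\bar{D}_-)>0$.

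With ergodicity and $\Pb(D)>0$ in hand, $\Pb(W)=1$ follows from Birkhoff's theorem: the frequency $\frac1n\sum_{k=0}^{n-1}1_D(\th_k\om)$ converges $\Pb$-a.s. to $\Pb(D)>0$, and the same applied to $\th^{-1}$ shows that a.e. orbit visits $D$ infinitely often in both directions, i.e. $N(\om,(-\infty,0])=N(\om,[0,\infty))=\infty$. I would then identify $\hat{\th}=\th_\tau$ as the first-return map of $\th$ to $D$ (under $\hat{\Pb}$ one has $\tau_0=0$, so $\tau=\tau_1$ is the first return time) and $\hat{\Pb}=\Pb(\cdot\,|\,D)$ as the normalized restriction. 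Kac's theorem on induced transformations then gives at once that $\hat{\th}$ preserves $\hat{\Pb}$ and that $(\Om,\hat{\Pb},\hat{\th})$ is ergodic, since the ambient system $(\Om,\Pb,\th)$ is ergodic and $\Pb(D)>0$. Finally, writing each gap as $\tau_{n+1}-\tau_n=\tau\circ\hat{\th}^{\,n}$ and using the $\hat{\Pb}$-invariance of $\hat{\th}$ shows that $\{\tau_{n+1}-\tau_n\}_{n\in\Zb}$ is stationary, the two-sidedness coming from the invertibility of $\th$.

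The main obstacle I expect is the positivity $\Pb(D)>0$ for the \emph{two-sided} event: unlike the one-sided renewal of \cite{BR07,MPRV12}, the forward and backward half-trajectories are linked through the common vertical walk $Z$, so the naive factorization $\Pb(\bar{D}_+\cap\bar{D}_-)=\Pb(\bar{D}_+)\Pb(\bar{D}_-)$ is not available and must be replaced by the conditioning argument on $Z$ sketched above. A secondary technical point is to verify that the return time is $\hat{\Pb}$-a.s. finite with finite mean, which is exactly the content of Lemma \ref{esttaubar}, so that $\hat{\th}$ is well defined $\hat{\Pb}$-a.s. and Kac's theorem legitimately applies.
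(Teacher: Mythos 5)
Your proposal is correct and follows essentially the same route as the paper: the renewal times are identified with the successive visits of the orbit to $D$, $\Pb(D)>0$ comes from ballisticity of $\bar{Y}$ in both time directions (the paper likewise defers this to the idea of \cite{MPRV12}), $\Pb(W)=1$ follows from ergodicity of the ambient shift together with $\Pb(D)>0$, and the stationarity of the gaps and the ergodicity of $(\Om,\hat{\Pb},\hat{\th})$ are obtained from the induced (first-return) system on $D$. The only difference is one of packaging: you invoke the standard Kac/induced-transformation theorem, whereas the paper reproves that content by hand (a direct computation for the $\hat{\th}$-invariance of $\hat{\Pb}$, and the argument showing that $\hat{\th}^{-1}B$ is $\th_1$-invariant for an invariant $A$ with $B=A\cap\hat{\Om}$), and you are in fact slightly more careful than the paper in flagging that $\bar{D}_+$ and $\bar{D}_-$ are only conditionally independent given the vertical walk $Z$, so that $\Pb(D)>0$ requires the conditioning argument rather than a naive product formula.
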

\begin{proof}
The random walk $\bar{Y}$ has these speeds: 
$$
\bar{v}(\be)=\lim_{n\to+\infty}\frac{\bar{Y}_n.e_1}{n}=\frac{\be}{d}.\Pb(Z_0\notin)>0\text{ and }
\bar{v}_-(\be)=\lim_{n\to-\infty}\frac{\bar{Y}_n.e_1}{n}=-\frac{\be}{d}.\Pb(Z_0\notin)<0.
$$
Using the idea in \cite{MPRV12} we get $\Pb(D)>0.$ Because $\bar{Y}$ has stationary increments, $\Pb(D)>0$ implies that $\Pb(W)=1$. 
Now, we prove the remain part of Lemma \ref{lemmasta}.
$(\Om,\Pb,\th)$ is the standard ergodic system. We will prove it also is true for $(\Om,\hat{\Pb},\hat{\th})$.   
First, we prove that $\hat{\Pb}$ is invariant under $\hat{\th}$.
Take any set $A\subset W$. Without loss of generality, suppose that $A\subset (0\in\D)$, then we have:
\allowdisplaybreaks
{
\begin{align*}
\hat{\th}\circ\hat{\Pb}(A)
&=\hat{\Pb}\left(\hat{\th}^{-1}A\right)=\frac{\Pb\left(\th^{-1}_{\tau_1}A,D\right)}{\Pb(D)}\\
&=\sum_{k\geq 1}\frac{\Pb\left(\th^{-1}_kA,\tau_1=k,D\right)}{\Pb(D)}\\
&=\sum_{k\geq 1}\frac{\Pb\left(A,\tau_{-1}=-k,D\right)}{\Pb(D)}\\
&=\frac{\Pb(A)}{\Pb(D)}=\hat{\Pb}(A).
\end{align*}
}
Next, we prove that for any set $A\subset W$ such that $\hat{\th}^{-1}A=A$ then $\hat{\Pb}(A)=0 \text{ or }1.$ 
 Indeed, set $\hat{\Om}:=(0\in\D)$ and $B:=A\cap\hat{\Om}\subset W $.  Note that 
 $\hat{\th}^{-1}(\hat{\Om})=W$ then
$\hat{\th}^{-1}A=\hat{\th}^{-1}B$. This in turn implies that $\hat{\th}^{-1}B \cap \hat{\Om}
=\hat{\th}^{-1}A \cap \hat{\Om}= A \cap \hat{\Om}= B$.

We will prove that $\th_1\left[\hat{\th}^{-1}B\right]=\hat{\th}^{-1}B$.
Using the ergodicity of $(\Om, \Pb, \theta)$, it follows that $\Pb\left(\hat{\th}^{-1}B\right)$ $=0\text{ or }1$, 
and 
$$\hat{\Pb}(A)=\hat{\Pb}(\hat{\th}^{-1}A)= \hat{\Pb}(\hat{\th}^{-1}B)
=\frac{\Pb\left(\hat{\th}^{-1}B \cap \hat{\Om}\right)}{\Pb(\hat{\Om})}=0\text{ or }1.$$
Therefore, to finish the proof we only need to prove that $\th_1\left[\hat{\th}^{-1}B\right]=\hat{\th}^{-1}B$. Firstly, we show that $\th_1\left[\hat{\th}^{-1}B\right]\subset\hat{\th}^{-1}B$. 
Let $x\in\hat{\th}^{-1}B$ then $\hat{\th}x\in B.$ If $\tau_1(x)>1$,
 we have $\hat{\th}(\th_1x)=\hat{\th}x\in B$. Hence $\th_1x\in\hat{\th}^{-1}B.$ 
 If $\tau_1(x)=1$ then $\th_1x=\hat{\th}x\in B
 =\hat{\th}^{-1}B\cap\hat{\Om}$. This implies $\th_1x\in\hat{\th}^{-1}B.$ It remains to prove that $\hat{\th}^{-1}B\subset\th_1\left[\hat{\th}^{-1}B\right]$. 
Take $x\in\hat{\th}^{-1}B$ then $x=\th_1\left(\th_{-1}x\right)$ and we will prove that 
$\th_{-1}x\in\hat{\th}^{-1}B\Leftrightarrow\hat{\th}\left(\th_{-1}x\right)\in B.$ 
If $x\in\hat{\Om}$, then $\hat{\th}\left(\th_{-1}x\right)=x\in\hat{\th}^{-1}B\cap\hat{\Om}=B.$
 If $x\notin\hat{\Om}$, then $\hat{\th}\left(\th_{-1}x\right)=\hat{\th}x\in B.$
Because that the sequence of renewal times of the random walk $\bar{Y}$ is also for $Y$ the $m$-ERW with bias $\be.$ So, the increments $\{Y_{[\tau_n,\tau_{n+1})}\}_{n\in\Zb}$ are disjoint. Hence
$$
X_{\tau_{k+1}}-X_{\tau_k}=X_{\tau_1}\circ\hat{\th}_k \, .
$$
$$
\bar{X}_{\tau_{k+1}}-\bar{X}_{\tau_k}=\bar{X}_{\tau_1}\circ\hat{\th}_k \, .
$$
From the equations above and using the ergodicity of $(\Om,\hat{\Pb},\hat{\th})$, we apply that $\hat{\Pb}-$a.s. there exist $\bar{v}(\be), v(m,\be)>0$ such that
$$
v(m,\be)=\lim_{n\to +\infty}\frac{X_n}{n}=\frac{\hat{\Eb}X_{\tau}}{\hat{\Eb}\tau}.
$$
Note that if $\frac{X_n}{n}$ converges $\hat{\Pb}-a.s.$ to $v(m,\be)$ then it is true for $\Pb-a.s.$ Indeed, there exists a set $\hat{A}\subset D$ such that $\hat{\Pb}(\hat{A})=1$ and for all $\om\in\hat{A}$, $\frac{X_n}{n}(\om)$ converges to $v.$ We suppose that there exists a subset $B\subset \Om$ such that $\hat{\th}B\subset \hat{A}^c$. If $\Pb(B)>0$, by $B=\bigcup_{k=1}^{\infty}(B,T=k)$ then there exists $k$ such that $\Pb(B,T=k)>0.$ This implies that $\Pb[\th_k(B,T=k)]=\Pb(B,T=k)>0$. On the other hand $[\th_k(B,T=k)]\subset (\hat{\th}B)$, so $\Pb(\hat{\th}B)>0$ and $\hat{\Pb}(\hat{A}^c)=\frac{{\Pb}(\hat{A}^c)}{\Pb(D)}>0.$ This is contradictory with the supposition that $\hat{\Pb}(\hat{A})=1.$ Therefore, $\Pb(B)=0$. Now, let $B=\hat{\th}^{-1}(\hat{A}^c)$ then $B$ satisfies that $\hat{\th}B\subset \hat{A}^c$. This implies that $\Pb(B)=0$ and $\Pb(\hat{\th}^{-1}(\hat{A}))=1.$ For all $\om\in\hat{\th}^{-1}(\hat{A})$ then $\frac{X_n}{n}(\hat{\th}\om)$ converges to $v$, so $\frac{X_n}{n}(\om)$ converges to $v$. It means that $\frac{X_n}{n}$ converges to $v$ almost surely under $\Pb.$
\end{proof}

\subsubsection{Girsanov's transform} 
The couple $(\bar{Y},Y)$ takes its values in the space $U={(\Zb^d)}^{\Zb}\times{(\Zb^d)}^{\Nb}$. Consider $U^*=\{(\bar{y}_i)_{i\in\Zb}\times(y_j)_{j\in\Nb},\bar{y}_0=y_0=0;\bar{\ep}_i,\ep_j\in\{-1,1\}\text{ for }i\in\Nb,\bar{z_i}=z_i\text{ and }\bar{\ep}_i=\ep_i\text{ if }y_i\in^m_{\leq}\}$. Denote $\Pb_{m,\be_0,\be}$ the law of the couple $(\bar{Y},Y)$ and $\hat{\Pb}_{m,\be_0,\be}(\cdot)=\Pb_{m,\be_0,\be}(\cdot|D)$. Let $y=(y_i)_{i\in \Nb}, \bar{y}=(\bar{y}_i)_{i\in\Zb}$ and $z=(z_i)_{i\in\Zb}=(\bar{y}_i\cdot e
_2,...,\bar{y}_i\cdot e
_d)_{i\in\Zb}=(y_i\cdot e
_2,...,y_i\cdot e
_d)_{i\in\Zb}$ then
\begin{align*}
&q_n(m,\be)(y,\bar{y}):=\Pb_{m,\be_0,\be}[\bar{Y}_{n+1}=\bar{y}_{n+1},Y_{n+1}=y_{n+1}|(Z_i=z_i)_{i<0},\bar{Y}_0=Y_0=0,...,\bar{Y}_n=\bar{y}_n,Y_n=y_n]\\
&=\frac{1}{d}\left[\frac{1+\be_0}{2}1_{z_n\notin}1_{\bar{\ep}_n=\ep_n=1}+\frac{\be-\be_0}{2}1_{z_n\notin}1_{\bar{\ep}_n=-1,\ep_n=1}+\frac{1}{2}1_{z_n\notin}1_{\bar{\ep}_n=\ep_n=-1}
+\frac{1}{2}1_{z_n\in,y_n\notin^m_{\leq}}1_{\bar{\ep}_n
=\ep_n=1}+\right.\\
&\frac{\be}{2}1_{z_n\in,y_n\notin^m_{\leq}}1_{\bar{\ep}_n=-1,\ep_n=1}\left.+\frac{1}{2}1_{z_n\in,y_n\notin^m_{\leq}}1_{\bar{\ep}_n=\ep_n=-1}+\frac{1}{2}1_{y_n\in^m_{\leq}}1_{\bar{\ep}_n=\ep_n=1}+\frac{1}{2}1_{y_n\in^m_{\leq}}1_{\bar{\ep}_n=\ep_n=-1}+\frac{1}{2}1_{\bar{\ep}_n=\ep_n=0}\right]\\
&=\frac{1}{d}\left[\frac{1+\be_0}{2}1_{z_n\notin}1_{\bar{\ep}_n=\ep_n=1}+\frac{\be-\be_0}{2}1_{z_n\notin}1_{\bar{\ep}_n=-1,\ep_n=1}+\frac{1}{2}1_{\bar{\ep}_n=\ep_n=-1}
+\frac{1}{2}1_{z_n\in}1_{\bar{\ep}_n
=\ep_n=1}+\right.\\
&\frac{\be}{2}1_{z_n\in,y_n\notin^m_{\leq}}1_{\bar{\ep}_n=-1,\ep_n=1}\left.+\frac{1}{2}1_{\bar{\ep}_n=\ep_n=0}\right].
\end{align*}
The computation above is explained as follows: for example in the case $\{z_n\notin,\bar{\ep}_n=\ep_n=1\}$ we have $\bar{\ze}_n=1,\ze_n=1$. Then probability equals $p_{.11}=p_{011}+p_{111}=\frac{\be_0}{2}+\frac{1}{2}$ since \eqref{dis}.

Moreover, the law of $\bar{Y}$ does not depend on $m,\be$ then
\begin{align*}
&\Pb_{m,\be_0,\be}[\bar{Y}_0=Y_0=0,...,\bar{Y}_n=\bar{y}_n,Y_n=y_n,\bar{Y}_{n+1}=\bar{y}_{n+1},...,\bar{Y}_{n+k}=\bar{y}_{n+k}|(Z_i=z_i)_{i<0}]\\
&=\Pb_{m,\be_0,\be}[\bar{Y}_0=Y_0=0,...,\bar{Y}_n=\bar{y}_n,Y_n=y_n|(Z_i=z_i)_{i<0}]\\
&\times
\Pb_{m,\be_0,\be}[\bar{Y}_{n+1}=\bar{y}_{n+1},...,\bar{Y}_{n+k}=\bar{y}_{n+k}|(Z_i=z_i)_{i<0},\bar{Y}_0=Y_0=0,...,\bar{Y}_n=\bar{y}_n,Y_n=y_n]\\
&=\Pb_{m,\be_0,\be}[\bar{Y}_0=Y_0=0,...,\bar{Y}_n=\bar{y}_n,Y_n=y_n|(Z_i=z_i)_{i<0}]\\
&\times
\Pb_{1,\be_0,\be_0}[\bar{Y}_{n+1}=\bar{y}_{n+1},...,\bar{Y}_{n+k}=\bar{y}_{n+k}|(Z_i=z_i)_{i<0},\bar{Y}_0=Y_0=0,...,\bar{Y}_n=\bar{y}_n,Y_n=y_n].
\end{align*}
Therefore,
\begin{align*}
&\frac{\Pb_{m,\be_0,\be}[\bar{Y}_0=Y_0=0,...,\bar{Y}_n=\bar{y}_n,Y_n=y_n,\bar{Y}_{n+1}=\bar{y}_{n+1},...,\bar{Y}_{n+k}=\bar{y}_{n+k}|(Z_i=z_i)_{i<0}]}{\Pb_{1,\be_0,\be_0}[\bar{Y}_0=Y_0=0,...,\bar{Y}_n=\bar{y}_n,Y_n=y_n,\bar{Y}_{n+1}=\bar{y}_{n+1},...,\bar{Y}_{n+k}=\bar{y}_{n+k}|(Z_i=z_i)_{i<0}]}\\
&=\frac{\Pb_{m,\be_0,\be}[\bar{Y}_0=Y_0=0,...,\bar{Y}_n=\bar{y}_n,Y_n=y_n|(Z_i=z_i)_{i<0}]}{\Pb_{1,\be_0,\be_0}[\bar{Y}_0=Y_0=0,...,\bar{Y}_n=\bar{y}_n,Y_n=y_n|(Z_i=z_i)_{i<0}]}\\
&=\prod_{i=0}^{n-1}\frac{q_i(m,\be)(y,\bar{y})}{q_i(1,\be_0)(y,\bar{y})}
\end{align*}
Set $$Q_n(m,\be)=q_n(m,\be)(\bar{Y},Y), \F_n=\si\{(\bar{Y}_i)_{i\in\Zb},(Y_m)_{0\leq m\leq n}\}, M_n(m,\be)=\prod_{i=0}^{n-1}\frac{Q_i(m,\be)}{Q_i(1,\be_0)}.$$
We deduce that 
$$
\frac{d\Pb_{m,\be_0,\be}}{d\Pb_{1,\be_0,\be_0}}|_{\F_n}=M_n(m,\be),\ \frac{d\Pb_{m,\be_0,\be}}{d\Pb_{1,\be_0,\be_0}}|_{\F_{\tau}}=M_{\tau}(m,\be).
$$
We get the formula of the speed for $m-$excited random walk $Y$:
$$
v(m,\be)=\frac{\be}{d}\frac{\hat{\Eb}_{m,\be_0,\be}(N^m_{\tau})}{\hat{\Eb}_{m,\be_0,\be}(\tau)}=\frac{\be}{d}\frac{\hat{\Eb}_{m,\be_0,\be}(N^m_{\tau})}{\hat{\Eb}_{1,\be_0,\be_0}(\tau)},
$$
$$
\frac{\partial v}{\partial\be}(m,\be)=\frac{1}{d}\frac{\hat{\bar{\Eb}}_{1,\be_0}(N^m_{\tau}M_{\tau}(m,\be))}{\hat{\bar{\Eb}}_{1,\be_0}\tau}+\frac{\be}{d}\frac{\hat{\bar{\Eb}}_{1,\be_0}(N^m_{\tau}M_{\tau}(m,\be)V_{\tau}(m,\be))}{\hat{\bar{\Eb}}_{1,\be_0}\tau}.
$$
where 
$$
V_{\tau}(m,\be)=\frac{\frac{\partial}{\partial\be} M_{\tau}(m,\be)}{M_{\tau}(m,\be)}.
$$
Taking $m\to\infty$, by Lemma \ref{lemabasic} we get that $\frac{\partial v}{\partial\be}(m,\be)$ converges to $\frac{1}{d}$ uniformly in $\be\in[\be_0,1]$ when $m$ tends to infinity. This finishes the proof of Theorem.

\begin{center}
\huge{Acknowledgements}
\end{center}
I would like to thank my Ph.D. advisor  Pierre Mathieu for suggesting this problem. This research was supported by the
french ANR project MEMEMO2 2010 BLAN 0125.
\newpage
\bibliographystyle{plain}
\bibliography{D:/Bib/dl}
\end{document}